%CUSTOM TEMPLATE
%puts eqn numbering on right side
%AMS Family
%Tikz Family
%several math tools
%page structure
%derivative symbols
%allows for enumerating
%-------------------------------------------%
%					SYMBOLS
%shortcut for caligraphic characters
%	â€¾â€¾â€¾â€¾â€¾|
%		 |--shortcuts for common spaces
%		_____|
%space of vector fields
% partial deriv. w/r/t x & y
% d/dt
% second fundamental form
%-------------------------------------------%
%				THEOREM STYLES
%Custom theorem names
%-------------------------------------------%
%				FORMATTING
%\setlist[enumerate]{topsep=12pt,itemsep=1ex,partopsep=0ex,parsep=1ex}
%style of enumeration
%equation numbering follows section numbering
%-------------------------------------------%

\documentclass[12pt,reqno]{amsart}
%%%%%%%%%%%%%%%%%%%%%%%%%%%%%%%%%%%%%%%%%%%%%%%%%%%%%%%%%%%%%%%%%%%%%%%%%%%%%%%%%%%%%%%%%%%%%%%%%%%%%%%%%%%%%%%%%%%%%%%%%%%%%%%%%%%%%%%%%%%%%%%%%%%%%%%%%%%%%%%%%%%%%%%%%%%%%%%%%%%%%%%%%%%%%%%%%%%%%%%%%%%%%%%%%%%%%%%%%%%%%%%%%%%%%%%%%%%%%%%%%%%%%%%%%%%%
\usepackage{hyperref}
\usepackage{amsmath,amsfonts,amscd,amsthm,amssymb}
\usepackage{tikz,tikz-cd}
\usepackage{commath}
\usepackage[margin=1in]{geometry}
\usepackage{derivative}
\usepackage{enumitem}
\usepackage{changepage}
\usepackage{color,soul}

\setcounter{MaxMatrixCols}{10}
%TCIDATA{OutputFilter=LATEX.DLL}
%TCIDATA{Version=5.50.0.2960}
%TCIDATA{<META NAME="SaveForMode" CONTENT="1">}
%TCIDATA{BibliographyScheme=Manual}
%TCIDATA{LastRevised=Monday, April 01, 2024 18:32:39}
%TCIDATA{<META NAME="GraphicsSave" CONTENT="32">}
%TCIDATA{Language=American English}

\definecolor{myurlcolor}{rgb}{0,0.6,0.2}
\hypersetup{colorlinks, linkcolor=myurlcolor, citecolor=myurlcolor, urlcolor=myurlcolor}

\newtheorem{thmalph}{Theorem}
\newtheorem{lemalph}[thmalph]{Lemma}
\newtheorem{dfnalph}[thmalph]{Definition}

\newtheorem{lemma}{Lemma}[section]
\newtheorem{proposition}[lemma]{Proposition}

\newtheorem{Main Lemma}[lemma]{Main Lemma}
\newtheorem*{theorem}{Theorem}
\theoremstyle{remark}				

\newtheoremstyle{acknowledge}{}{}{\itshape}{}{\bfseries}{:}{ }{}
\theoremstyle{acknowledge}
\newtheorem*{acknowledgment}{Acknowledgment}
 	
\numberwithin{equation}{section}	
\urlstyle{rm}
\let\oldref\ref						\renewcommand{\ref}[1]{(\oldref{#1})}

\begin{document}
\title{Random 3-Manifolds Have No Totally Geodesic Submanifolds}
\author{Hasan M. El-Hasan}
\address{Hasan M. El-Hasan, Deptartment of Mathematics, University of
California, Riverside, Riverside, Ca 92521}
\email{helh001@ucr.edu}
\urladdr{https://sites.google.com/ucr.edu/helh001/}
\author{Frederick Wilhelm}
\thanks{Both authors gratefully acknowledge the support of the NSF, via
Award DMS 2203686.}
\address{Fred Wilhelm, Deptartment of Mathematics, University of California
Riverside, Riverside, Ca 92521. }
\email{fred@math.ucr.edu}
\urladdr{https://sites.google.com/site/frederickhwilhelmjr/home}
\date{April 2024}

\begin{abstract}
Murphy and the second author showed that a generic closed Riemannian
manifold has no totally geodesic submanifolds, provided it is at least four
dimensional. Lytchak and Petrunin established the same thing in dimension 3.
For the higher dimensional result, the \textquotedblleft generic
set\textquotedblright\ is open and dense in the $C^{q}$--topology for any $%
q\geq 2.$ In Lytchak and Petrunin's work, the \textquotedblleft generic
set\textquotedblright\ is a dense $G_{\delta }$ in the $C^{q}$--topology for
any $q\geq 2.$ Here we show that the set of such metrics on a compact $3$
--manifold contains a set that is open and dense in the $C^{q}$--topology
for any $q\geq 3.$
\end{abstract}

\maketitle

In his magnum opus, Spivak writes that it

\bigskip

\begin{quotation}
\textquotedblleft \textit{seems rather clear that if one takes a Riemannian
manifold }$\left( N,\left\langle \cdot ,\cdot \right\rangle \right) $ \textit{%
 `at random', then it will not have any totally geodesic
submanifolds of dimension $>1$. But I must admit that I don't know of any
specific example of such a manifold.\textquotedblright } (\cite{spivak}, p.
39)\bigskip
\end{quotation}

Murphy and the second author proved that this is indeed the case for compact
manifolds with dimension $\geq 4.$\bigskip

\begin{thmalph}[\protect\cite{MurphWilh}]
\label{murphWilh}Let $M$ be a compact, smooth manifold of dimension $\geq 4$%
. For any finite $q\geq 2$, the set of Riemannian metrics on $M$ with no
nontrivial immersed totally geodesic submanifolds contains a set that is
open and dense in the $C^{q}$--topology.
\end{thmalph}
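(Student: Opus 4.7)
The plan is to produce an open and dense set of metrics by ruling out, at every point and every intermediate dimension, the finite-jet algebraic obstructions that any tangent plane to a totally geodesic submanifold must satisfy, and then to invoke Thom's jet transversality theorem. Fix $k$ with $2\le k\le n-1$. If $N^{k}\subset M$ is totally geodesic and $p\in N$, then $V:=T_{p}N\in\mathrm{Gr}_{k}(T_{p}M)$ satisfies the curvature-invariance
\[
R(X,Y)Z\in V\qquad\text{for all }X,Y,Z\in V,
\]
forced by the Codazzi equation and the vanishing of $\sff$. Differentiating $\sff\equiv 0$ tangentially along $N$ yields further analogous conditions on $\nabla R,\nabla^{2}R,\ldots$ at $p$, each polynomial in a suitable jet of $g$ and in $V$.

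For $q_{0}$ sufficiently large, I would form the fiber bundle $E=J^{q_{0}}(\mathrm{Sym}^{2}T^{*}M)\times_{M}\mathrm{Gr}_{k}(TM)$ over $M$, and let $\mathcal B_{k}\subset E$ be the closed algebraic subset cut out by this finite list of conditions. The key estimate to establish is
\[
\mathrm{codim}_{E}\,\mathcal B_{k}\;>\;\dim\mathrm{Gr}_{k}(TM)\;=\;n+k(n-k).
\]
Once this is in hand, Thom's jet transversality theorem implies that the set $\mathcal U_{k}$ of metrics whose $q_{0}$-jet, pulled back to $\mathrm{Gr}_{k}(TM)$, avoids $\mathcal B_{k}$ is residual and hence $C^{q_{0}}$-dense; it is $C^{q_{0}}$-open by compactness of $M$ and $\mathrm{Gr}_{k}(TM)$, avoidance of a closed set being a stable condition on the jet. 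The finite intersection $\mathcal U:=\bigcap_{k=2}^{n-1}\mathcal U_{k}$ is then open and dense, and consists of metrics with no candidate tangent plane at any point—so no nontrivial totally geodesic submanifolds exist. Density in coarser $C^{q}$-topologies for $2\le q\le q_{0}$ follows from density in $C^{q_{0}}$.

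The main obstacle is the codimension count in Step~2. For $k\ge 3$ the curvature-invariance condition alone suffices: modulo the first Bianchi identity it contributes $(n-k)\cdot k(k^{2}-1)/3$ independent obstructions, which exceeds $n+k(n-k)$ for $n\ge 4$. For $k=2$, however, curvature invariance alone contributes only $2(n-2)$ obstructions—exactly the Grassmannian dimension—so I would augment with the first-order conditions on $\nabla R$, namely $(\nabla_{W}R)(X,Y)Z\in V$ for $W,X,Y,Z\in V$; the second Bianchi identity is trivially satisfied when $k=2$, so these contribute $4(n-2)$ additional independent obstructions, bringing the total to $6(n-2)>3n-4=n+2(n-2)$ for $n\ge 4$. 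A subsidiary technical point is to verify that the collected equations really are independent on a Zariski-open subset of $E$, justifying the codimension of $\mathcal B_{k}$. The hypothesis $n\ge 4$ is decisive throughout: in dimension $3$ the analogous count collapses, since every Ricci eigendirection generates a curvature-invariant $2$-plane, which is precisely why the present paper is needed.
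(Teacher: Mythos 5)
The statement you are proving is a quoted theorem of Murphy and Wilhelm; the present paper does not re-prove it, but describes its approach as analogous (and claims that the present argument, with a minor modification, recovers it when $q\ge 3$). Both the paper and \cite{MurphWilh} proceed by \emph{explicit local perturbation}: they deform $g_{23}$ in adapted coordinates by $sf$, where $f$ has a prescribed large pure second (or third) derivative, so that at a given rigid plane $P$ one of $R(T,v)v^{\perp}$ or $(\nabla_v R)(T,v)v^{\perp}$ becomes large; they then patch these deformations together using compactness of the Grassmannian and continuity of the rigidity invariant. Your proposal is a genuinely different strategy --- jet transversality together with a codimension count in a jet bundle over the Grassmannian. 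That trades a hands-on estimate for an appeal to a powerful black box, and it is in principle a standard way to prove genericity statements.

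However, there is a real gap at exactly the point you flag as a ``subsidiary technical point.'' To apply jet transversality and conclude avoidance, you must show that $\mathcal B_k$ admits a Whitney stratification with \emph{every} stratum of codimension $>\dim\mathrm{Gr}_k(TM)$. Counting equations only bounds the codimension of the generic stratum from above; it does not rule out positive-dimensional degenerate strata where the equations become dependent (for instance, planes $V$ contained in an eigenspace of $R_v$, or jets where several Ricci eigenvalues coincide). Verifying this requires analyzing the rank of the map from $(j^{q_0}g,V)$ to the obstruction tensors on all strata, and in essence is where the real work of the paper and of \cite{MurphWilh} lives: the explicit perturbation in Lemma~\ref{local-M} and Propositions~\ref{setup-prop}--\ref{setup-prop2} is precisely the hands-on demonstration that the conditions can be forced to fail, which in your language would be a surjectivity/submersion statement about the jet-to-obstruction map. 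Without it, your codimension inequality is a heuristic, not a proof.

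Two further points. First, your obstruction set for $k=2$ depends on $\nabla R$, hence on the $3$-jet of $g$, so the set $\mathcal U_2$ you construct is open only in $C^q$ for $q\ge 3$; this does not yield the theorem for $q=2$ as stated. (This is consistent with the present paper claiming only the $q\ge 3$ case; to recover $q=2$, \cite{MurphWilh} must handle $k=2$ using only curvature, i.e.\ by a different mechanism than the one you sketch.) Second, your closing remark is internally inconsistent: your own inequality $6(n-2)>3n-4$ holds for $n\ge 3$, not merely $n\ge 4$, so by your own count the $\nabla R$-augmented method does \emph{not} ``collapse'' in dimension $3$ --- that is precisely the result the present paper establishes, with the hard part being the independence verification you omit. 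Also, a small slip: $2(n-2)$ is the dimension of the Grassmannian \emph{fiber} $\mathrm{Gr}_2(\mathbb R^n)$, not of the total space $\mathrm{Gr}_2(TM)$, which has dimension $3n-4$.
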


Lytchak and Petrunin confirmed Spivak's intuition in dimension $3$. Indeed,
a consequence of their main theorem in \cite{LytPetr} is

\begin{theorem}[\protect\cite{LytPetr}]
Let $M $ be a compact, smooth $3 $--manifold. For any finite $q\geq 2$, let $%
\mathcal{M}^{q}\left( M\right) $ be the space of Riemannian metrics on $M $
with the $C^q $--topology. Then $\mathcal{M}^{q}\left( M\right) $ contains a
dense $G_{\delta }$, $\mathcal{G} $, so that for all $g\in \mathcal{G} $, $%
\left(M,g\right) $ has no immersed totally geodesic surfaces.
\end{theorem}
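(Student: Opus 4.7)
The plan is a Baire category argument on the completely metrizable space $\mathcal{M}^{q}(M)$. I would exhibit the desired $G_{\delta}$ as $\mathcal{G} = \bigcap_{n \geq 1} U_{n}$, where $U_{n}$ is the set of metrics $g$ admitting no immersed totally geodesic $2$-disk of intrinsic radius at least $1/n$. Since any immersed totally geodesic surface contains such a disk for sufficiently large $n$, any $g \in \mathcal{G}$ will have the desired property.

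For openness of $U_{n}$, I would use a standard stability argument combined with compactness of $\mathrm{Gr}_{2}(TM)$. If $g_{k} \to g$ in $C^{q}$ and each $(M,g_{k})$ admits a totally geodesic disk $D_{k}$ of radius $\geq 1/n$ based at $(p_{k},\sigma_{k}) \in \mathrm{Gr}_{2}(TM)$, then after passing to a subsequence $(p_{k},\sigma_{k}) \to (p,\sigma)$. Each $D_{k}$ is swept out by $g_{k}$-geodesics issuing from $p_{k}$ in directions belonging to $\sigma_{k}$; since these geodesics depend continuously on the $C^{2}$-jet of the metric, they converge in $C^{2}$ to geodesics of $g$ that sweep out a totally geodesic disk based at $(p,\sigma)$, placing $g$ outside $U_{n}$.

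Density of $U_{n}$ is the main content. Given $g \notin U_{n}$ with an offending disk $D$ based at $(p,\sigma)$, the totally geodesic condition $\sff_{D} \equiv 0$ at $p$ is encoded in the vanishing of a symmetric $2$-tensor on $\sigma$, i.e.\ three scalar conditions on the $1$-jet of $g$ at $p$. A suitable bump-function perturbation of $g$ supported near $p$ can alter the $1$-jet of $g$ in a direction that activates $\sff_{D}(p)$, destroying the totally geodesic property of $D$. Because $\mathrm{Gr}_{2}(TM)$ is $5$-dimensional and the space of admissible bump perturbations is infinite-dimensional, a transversality-style argument produces a single perturbation destroying this property throughout an open neighborhood of $(p,\sigma)$ in $\mathrm{Gr}_{2}(TM)$. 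Applying finitely many such local perturbations, one for each element of a finite cover of $\mathrm{Gr}_{2}(TM)$ and invoking openness of $U_{n}$ at each step, yields a metric in $U_{n}$ arbitrarily $C^{q}$-close to $g$.

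The main obstacle I anticipate is that destroying one totally geodesic surface might create new ones elsewhere: the space of candidate surfaces is a priori infinite-dimensional, and localized perturbations need not commute nicely with the global totally geodesic equation. The way through is twofold. First, once openness of $U_{n}$ has been established, density only requires producing a single metric in $U_{n}$ inside each given $C^{q}$-neighborhood, so no global simultaneous control is needed. Second, the overdetermined nature of the totally geodesic PDE in codimension $1$ (three conditions from $\sff$ against a single normal function of two variables) means that nearby metrics are generically free of totally geodesic surfaces tangent to any prescribed plane, which is the transversality input. Turning this genericity intuition into an effective construction of $C^{q}$-small explicit perturbations is what I expect to be the main technical hurdle, and it is precisely the step where the codimension-$1$ case (dimension $3$) is subtler than the higher-codimension cases handled by Theorem \ref{murphWilh}.
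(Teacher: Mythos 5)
First, a caveat: the paper does not actually prove this theorem---it cites it from Lytchak--Petrunin---and instead proves the stronger Theorem \ref{G generic them} (an open dense set, for $q\geq 3$), so that is the relevant comparison. Your $G_{\delta}$ framework with the sets $U_{n}$ (no totally geodesic disk of radius $\geq 1/n$), together with the compactness/limiting argument for openness of each $U_{n}$, is a reasonable way to organize the Baire-category part. But the density step is where all the content lies, and there your argument has a genuine gap. The iterative gluing does not close as written: after one localized perturbation you are not yet in $U_{n}$, so ``invoking openness of $U_{n}$ at each step'' gives you nothing at intermediate stages, and nothing in your scheme prevents the $j$-th local perturbation from recreating a totally geodesic disk over the region cleared at earlier stages. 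What makes the iteration work in the paper is the replacement of the nonlocal condition ``admits a totally geodesic disk'' by the pointwise, $C^{3}$-continuous invariant $G$ on the compact Grassmannian bundle (Definition \ref{G Generic}), whose strict positivity is a local obstruction to total geodesy. The Main Lemma \ref{local-main} gives a quantitative lower bound $G^{s}>Ks$ on the perturbed region, compactness gives $G>\delta$ off it, and Lemma \ref{lem-C} then shows that a small enough $s$ preserves prior progress. Some such pointwise, quantifiable criterion is required before a finite gluing of local perturbations can be made to work.

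More substantively, your ``overdetermination'' heuristic (three conditions from the second fundamental form against one normal function of two variables) is a count about the totally geodesic PDE, not about the pointwise obstruction, and it misses exactly what makes codimension one hard. In codimension $\geq 2$ one can perturb $g(n_{1},w)$ and $g(n_{2},w)$ with two independent phases (say $\sin$ and $\cos$) so that the zero-order curvature term $\left\vert R_{v}(w)^{P\perp}\right\vert$ is activated at every point of a fixed-size neighborhood by a $C^{1}$-small perturbation. In dimension $3$ there is only one normal direction, and a $C^{1}$-small function cannot have uniformly large second derivative on a neighborhood of fixed size, so the curvature obstruction alone cannot be driven positive on a whole coordinate ball. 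The paper's resolution is a bump function whose second \emph{or} third derivative is large at each point (Lemmas \ref{local-R} and \ref{local-M}), paired with adding the derivative-of-curvature term $\left\vert \left(\nabla_{v}R_{v}\right)(w)^{P\perp}\right\vert$ to the invariant $G$. That extra term is the essential new ingredient in dimension $3$, is precisely why the result requires $q\geq 3$ rather than $q\geq 2$, and is entirely absent from your proposal; without it, your local transversality step cannot be carried out on a neighborhood of controlled size.
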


Here we refine this result by showing

\begin{thmalph}
\label{main thm}Let $M$ be a compact, smooth manifold of dimension $3$. For
any finite $q\geq 3$, the set of Riemannian metrics on $M$ with no immersed
totally geodesic surfaces contains a set that is open and dense in the $%
C^{q} $-topology.
\end{thmalph}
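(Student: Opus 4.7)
The plan is to upgrade the Lytchak--Petrunin density statement to an open-and-dense one by exhibiting a single over-determined, $3$-jet obstruction to a $2$-plane being tangent to a totally geodesic surface. Since $\dim M = 3$, the Grassmannian bundle $\mathrm{Gr}_{2}(TM)$ is $5$-dimensional, with each fiber $\mathrm{Gr}_{2}(\R^{3})$ of dimension $2$. I will exhibit six scalar pointwise obstructions; because $6 > 5$ the zero locus has negative expected dimension in $\mathrm{Gr}_{2}(TM)$, and because $6 > 2$ the obstructions are already over-determined on each single fiber.

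For $(p,\Pi) \in \mathrm{Gr}_{2}(TM)$, choose an orthonormal basis $\{e_{1},e_{2}\}$ of $\Pi$ and a unit normal $n$. If a totally geodesic surface $\Sigma$ has $T_{p}\Sigma = \Pi$, then $\sff \equiv 0$ along $\Sigma$, so the Codazzi equation forces
\[
\langle R(e_{1},e_{2})e_{i},n\rangle = 0,\qquad i=1,2.
\]
Because total geodesy makes parallel transport along $\Sigma$ preserve $T\Sigma$, differentiating the above identity along the tangent directions $W = e_{1},e_{2}$ and evaluating at $p$ (the Christoffel correction terms drop out, since $\nabla_{W}e_{j}|_{p}$ stays tangent) yields
\[
\langle (\nabla_{W}R)(e_{1},e_{2})e_{i},n\rangle = 0,\qquad W\in\{e_{1},e_{2}\},\ i=1,2.
\]
Assembling these six scalars into a frame-independent section $\Phi_{g}$ of an appropriate vector bundle over $\mathrm{Gr}_{2}(TM)$ (essentially the $\Pi^{\perp}$-component of $R$ and of its $\Pi$-tangent covariant derivative, restricted to $\Pi$), I observe that $\Phi_{g}(p,\Pi)$ is algebraic in the $3$-jet of $g$ at $p$; hence $g\mapsto \Phi_{g}$ is continuous for $g$ in $C^{q}$ with $q \geq 3$, and non-vanishing of $\Phi_{g}$ on the compact manifold $\mathrm{Gr}_{2}(TM)$ is a $C^{q}$-open condition that immediately rules out any immersed totally geodesic surface.

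To establish density I would fix $g_{0}$, cover $M$ by finitely many coordinate balls, and perform compactly supported $C^{q}$-small perturbations one ball at a time. The technical heart is a transversality statement: at every $(p,\Pi)$ where $\Phi_{g}$ vanishes, the linearization sending $3$-jets of compactly supported metric perturbations at $p$ into the induced change of $\Phi_{g}(p,\Pi)\in \R^{6}$ is surjective. Granting this, a Sard--Thom argument on the $5$-dimensional bundle $\mathrm{Gr}_{2}(TM)$ produces an arbitrarily $C^{q}$-close perturbation $g'$ with $\Phi_{g'}$ nowhere zero, after which openness and the compactness of $\mathrm{Gr}_{2}(TM)$ finish the proof. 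The main obstacle is precisely verifying this surjectivity, since $R$ and $\nabla R$ satisfy the curvature symmetries and the second Bianchi identity, and one must check that these relations never conspire to make the six components of $\Phi_{g}$ linearly dependent at a zero; I expect this to be handled by an explicit computation in normal coordinates, exploiting the freedom to prescribe the third-order Taylor jet of $g$ at $p$ subject only to the symmetries the curvature tensor is required to satisfy.
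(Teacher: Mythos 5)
Your proposal correctly identifies the right kind of obstruction (components of $R$ and $\nabla R$ perpendicular to the candidate plane, which depend on the $3$-jet of $g$) and the right architecture (openness from compactness of $\mathcal{G}r(M)$ and $C^3$-continuity; density from local perturbation). This parallels the paper, which packages essentially the same data into the scalar
\begin{equation*}
G(P)=\max_{v,w\in P,\,|v|=|w|=1}\Bigl\{\bigl|R_v(w)^{P\perp}\bigr|,\ \bigl|(\nabla_v R_v)(w)^{P\perp}\bigr|\bigr\}.
\end{equation*}
Note one small difference of emphasis: you propose six scalar obstructions and lean on the dimension count $6>5=\dim\mathcal{G}r(M)$; the paper only ever needs the ``Jacobi-direction'' pieces $R(w,v)v$ and $(\nabla_v R)(w,v)v$, and it does not need the zero set of $G$ to have the expected codimension, only that it can be emptied by a small perturbation.

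The genuine gap is exactly where you flag it: the surjectivity of the linearized map from $3$-jets of metric perturbations to $\mathbb{R}^6$ is asserted as plausible and left unproved, and that is the entire technical content of the theorem. Invoking Sard--Thom also requires either an infinite-dimensional transversality theorem or a carefully chosen finite-dimensional family of perturbations; you gesture at the latter but do not construct it, and the Bianchi identities (which you rightly worry about) do constrain which $(R,\nabla R)$ pairs are achievable, so the claim is not automatic. The paper replaces this appeal to transversality with an explicit construction: it perturbs only the $g(T,n)$ component by $s\,f$ in a coordinate chart adapted to the rigid plane $P=\mathrm{span}(v,T)$ with normal $n$, where $f=\chi\cdot(h\circ\pi_1)$ is built from a one-variable oscillation $h(t)=\tfrac12 K\eta^2\sin(t/\eta)$ chosen so that at every point either $|h''|$ or $|h'''|$ is $\geq\sqrt K$. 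A direct Christoffel/curvature computation (Propositions 2.5--2.8) then shows either $(\tilde R-\hat R)_{2vv3}$ or $(\tilde\nabla_v\tilde R-\hat\nabla_v\hat R)_{2vv3}$ grows like $Ks$, which is precisely a constructive proof that the linearization you need is onto, with explicit quantitative control so the local patches can be glued (Lemma \ref{lem-C} and Section \ref{Global section}). In short: your outline is sound and close in spirit, but the proposal stops short of the one step that actually requires work, which the paper handles by hand rather than by a genericity theorem.
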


Motivated by Perelman's solution of Thurston's geometrization conjecture (%
\cite{Perel1}, \cite{Perel2}, \cite{Perel3}), it is also natural to ask
about the existence of totally geodesic surfaces in compact $3$--manifolds
that have one of Thurston's eight canonical geometries. This problem is
attacked algorithmically for a special family of surfaces in cusped
hyperbolic 3--manifolds by Basillo, Lee, and Malionek in \cite{BLM}.

In the math overflow post \cite{bryant}, Bryant frames the question of
whether a generic manifold has a totally geodesic hypersurface in terms of
the curvature tensor. In the case of dimension $3$, and when the Ricci
tensor has distinct eigenvalues, he identifies the only three tangent planes
at a given point that can possibly be tangent to a totally geodesic surface.

We will take an approach that is more directly connected to \cite{MurphWilh}
. For starters we let $\mathcal{G}r\left( M\right) $ be the Grassmannian
bundle of planes tangent to $M.$ We then define an invariant $G:\mathcal{G}
r\left( M\right) \longrightarrow \mathbb{R}$ that vanishes on $P\subset 
\mathcal{G}r\left( M\right) $ whenever $P$ is tangent to a totally geodesic
surface. We will then show how to deform a given Riemannian $3$--manifold so
that $G$ is nowhere $0.$

\begin{dfnalph}
\label{G Generic}Given $v\in TM,$ let $R_{v}=R(\cdot ,v)v:T_{p}M\rightarrow
T_{p}M$ be the corresponding Jacobi operator, and let $\nabla _{\cdot }R_{v}$
be the covariant derivative of $R_{v}.$ Define the \textbf{generic plane}
operator, 
\begin{equation*}
G:\mathcal{G}r\left( M\right) \longrightarrow \mathbb{R}
\end{equation*}%
by 
\begin{equation*}
G\left( P\right) :=\max_{\left\{ v,w\in P\text{ }|\text{ }|v|=\left\vert
w\right\vert =1\right\} }\left\{ \left\vert R_{v}\left( w\right) ^{P\perp
}\right\vert ,\left\vert \left( \left( \nabla _{v}R_{v}\right) \left(
w\right) \right) ^{P\perp }\right\vert \right\} ,
\end{equation*}%
where the superscript $^{P\perp }$ denotes the component perpendicular to $%
P. $ If $G\left( P\right) \neq 0,$ then we say that $P$ is $G$--\textbf{%
generic. }

If $G\left( P\right) =0,$ then we say that $P$ is $G$--\textbf{\ rigid. }We
say that $\left( M,g\right) $ is $G$--\textbf{generic}, provided all $P\in 
\mathcal{G}r\left( M\right) $ are generic.
\end{dfnalph}

Note that if $P$ is $G$--generic, then $P$ is not tangent to any totally
geodesic submanifold. Thus $G$--rigid planes are analogous to the
\textquotedblleft Partially Geodesic\textquotedblright\ planes of \cite%
{MurphWilh}, and Theorem \ref{main thm} is a consequence of the following
result.

\begin{thmalph}
\label{G generic them}Let $M$ be a compact, smooth $3$--manifold. For any
finite $q\geq 3,$ the set of $G$--generic Riemannian metrics on $M$ is open
and dense in the $C^{q}$--topology.
\end{thmalph}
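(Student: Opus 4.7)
\emph{Openness.} The operator $G$ is jointly continuous in $(P, g)$ when $g$ varies in the $C^q$-topology with $q \geq 3$, since $\nabla R$ involves three derivatives of $g$. Compactness of $\mathcal{G}r(M)$ then implies that if $g_0$ is $G$-generic, $\min G(\cdot, g_0) > 0$, and continuity propagates this to a $C^q$-neighborhood of $g_0$.

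\emph{Density: jet-theoretic reduction.} Given $g_0$ and $\varepsilon > 0$, the plan is to construct $g$ with $\|g - g_0\|_{C^q} < \varepsilon$ that is $G$-generic, following the template of \cite{MurphWilh} but adapted to the critical dimension $n = 3$. First, $G$-rigidity is rewritten as an algebraic condition on the $3$-jet of $g$: fixing $p \in M$, a plane $P$, and an orthonormal frame $\{e_1, e_2\}$ of $P$ with $e_3 \perp P$, a short computation with the curvature symmetries shows that $G(P) = 0$ is equivalent to the five scalar equations
\[R_{1213} = R_{1223} = \nabla_1 R_{1213} = \nabla_2 R_{1223} = \nabla_1 R_{1223} + \nabla_2 R_{1213} = 0\]
at $p$. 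These cut out a subvariety $\mathcal{Z} \subset J^3(S^2 T^*M) \times_M \mathcal{G}r(M)$ of expected fiber codimension $5 = \dim \mathcal{G}r(M)$. Thom's jet-transversality theorem in the $C^q$-topology then predicts that, for a residual set of metrics, the set of $G$-rigid planes is at most an isolated subset of $\mathcal{G}r(M)$.

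\emph{From isolated to empty.} Since dimension $3$ puts us at the borderline codimension, jet-transversality alone yields only an isolated $G$-rigid locus, so a second perturbation step is needed. For each isolated $(p_i, P_i)$ in this locus, I would construct a local metric perturbation $h_i$ supported in a small neighborhood of $p_i$ whose effect on the five components above at $(p_i, P_i)$ pushes $G(P_i)$ strictly off zero. Since the rigid locus is finite, by shrinking supports and applying the $h_i$ one at a time, invoking the openness already established to preserve positivity at each stage, one arrives at a $G$-generic metric $C^q$-close to $g_0$.

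\emph{Main obstacle.} The technical heart of the argument is showing that the variation map from local metric $3$-jets at $p$ to the space of the five listed components is surjective. In dimension $3$ the full Riemann tensor is algebraically determined by the Ricci tensor, which could impose hidden relations among the quantities $R_{1213}, R_{1223}$ and their covariant derivatives that one is attempting to perturb independently. The bulk of the work lies in writing out the variations of $R$ and $\nabla R$ under a symmetric-$2$-tensor perturbation of $g$ and verifying that the resulting $5 \times 5$ Jacobian is nondegenerate at each $(p, P)$; both the transversality step and the isolated-to-empty step rest on this calculation.
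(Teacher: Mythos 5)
Your openness argument matches the paper's. Your density strategy, however, departs from the paper's in a way that leaves two genuine gaps.

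Your five scalar equations for $G$-rigidity at $(p,P)$ are in fact correct (polarizing the condition over all unit $v,w\in P$ and using the first Bianchi identity, which $\nabla R$ inherits), and you correctly read off that they have the same count as $\dim\mathcal{G}r(M)=5$. But the route you propose from there — Thom jet-transversality to make the rigid locus isolated, followed by a second perturbation to make it empty — has two unresolved steps, both of which you flag but neither of which you close. First, the transversality theorem needs the variation map from local metric $3$-jets onto those five components to be surjective; you correctly identify this as the crux, but in dimension $3$ (where $R$ is determined by $\ric$) this is exactly the kind of claim that needs a computation, not a gesture. Second, the ``from isolated to empty'' step is not a small cleanup: it is a local perturbation argument of the same type and difficulty as the one needed to justify transversality, and one must also control that each successive local perturbation does not reintroduce rigid planes elsewhere (the paper's Lemma~\ref{lem-C} is exactly the bookkeeping device for this). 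So you are proposing to do the hard local analysis twice, without doing it either time.

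The paper avoids transversality entirely and needs far less than surjectivity onto $\mathbb{R}^5$. Its Main Lemma~\ref{local-main} takes a $G$-rigid $P$ with frame $v,T\in P$, $n\perp P$, and perturbs only the single off-diagonal entry $g(T,n)$ by $sf$, where $f$ is built from the one-variable oscillation $h(t)=\tfrac12 K\eta^2\sin(t/\eta)$ of Lemma~\ref{local-R}. The point of that $h$ is that wherever $|h''|$ is small, $|h'''|$ is forced to be large, so after pushing through the curvature computations (Propositions~\ref{setup-prop} and~\ref{setup-prop2}) one of $(\tilde R-\hat R)_{2vv3}$ or $(\tilde\nabla_v\tilde R-\hat\nabla_v\hat R)_{2vv3}$ has size of order $s\sqrt K$ at \emph{every} point of a neighborhood of $P$ in $\mathcal{G}r(M)$. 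This gives a uniform linear-in-$s$ lower bound on $G^s$ over an open set of planes, which is precisely what Lemma~\ref{lem-C} needs for the global induction. In effect the paper trades your five-dimensional transversality problem for a two-component, one-sided estimate (make one of two numbers large) that is settled by a single explicit oscillating function; this is both weaker than and independent of the surjectivity question you list as your main obstacle. If you want to pursue your route, the missing content is the explicit $5\times 5$ Jacobian nondegeneracy you mention — but you should be aware that the paper's argument shows this is not actually necessary for the theorem.
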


Since $\mathcal{G}r\left( M\right) $ is compact and $G$ is continuous in the 
$C^{q}$--topology for any finite $q\geq 3$, the set of $G$--generic metrics
is open in the $C^{q}$--topology. Thus it suffices to show that the set of $%
G $--generic Riemannian metrics on $M$ is dense. More specifically it
suffices to show the

\bigskip

\noindent \textbf{Density Assertion: }\emph{Let }$\left( M,g\right) $\emph{\
be a compact, smooth Riemannian }$3$\emph{--manifold. Given any finite }$%
q\geq 3$\emph{\ and }$\xi >0$\emph{, there is a }$G$--\emph{generic
Riemannian metric }$\tilde{g}$\emph{\ on }$M$\emph{\ that satisfies} 
\begin{equation*}
\left\vert \tilde{g}-g\right\vert _{C^{q}}\leq \bigskip \xi .
\end{equation*}%
To prove the Density Assertion we will repeatedly apply the following result.

\begin{lemalph}
\label{lem-C}Let $\{g_{s}\}_{s\geq 0}$ be a smooth family of Riemannian
metrics on a \emph{\ }compact, smooth\emph{\ }$3$\emph{--}manifold $M$ with
corresponding generic plane operator $G^{s}.$ Let $\mathcal{U}$ be an open
subset of the Grassmannian $\mathcal{G}\left( M\right) .$ Let $\mathcal{R}$
be the set of $G$--rigid planes for $g_{0}$ in the closure $\overline{%
\mathcal{U}}$ of $\mathcal{U}.$ Suppose that there are $c,s_{0}$, and a
neighborhood $\mathcal{V}$ of $\mathcal{R}$ so that for every $P\in \mathcal{%
V},$ and every $s\in (0,s_{0})$, 
\begin{equation}
G^{s}\left( P\right) >cs.  \label{lem-C-inequality}
\end{equation}%
Then, for all sufficiently small $s$, every plane of $\overline{\mathcal{U}}$
is $G^{s}$--generic.
\end{lemalph}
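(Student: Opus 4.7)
My plan is to partition $\overline{\mathcal{U}}$ into the neighborhood $\mathcal{V}$ of $\mathcal{R}$, on which the hypothesis \ref{lem-C-inequality} already delivers $G^{s}(P) > cs > 0$, and the complement $\overline{\mathcal{U}} \setminus \mathcal{V}$, on which I will show that $G^{s}$ inherits a uniform positive lower bound from $G^{0}$ by continuity. The decisive observation is that every $G^{0}$-rigid plane in $\overline{\mathcal{U}}$ belongs to $\mathcal{R}$, hence to $\mathcal{V}$; therefore $\overline{\mathcal{U}} \setminus \mathcal{V}$ contains no $G^{0}$-rigid planes, and $G^{0}$ is strictly positive there.

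Since $\mathcal{G}r(M)$ is compact (as a fiber bundle with compact base and compact fiber), the closed set $\overline{\mathcal{U}} \setminus \mathcal{V}$ is compact, so the continuous positive function $G^{0}$ admits a uniform lower bound $\delta > 0$ on it. Because $\{g_{s}\}$ is a smooth family, the tensors $R^{s}$ and $\nabla^{s} R^{s}$ depend continuously on $s$ in the $C^{0}$-norm; combined with the fact that $G^{s}(P)$ is a maximum of a quantity jointly continuous in $(s,P,v,w)$ over a compact set of unit-vector pairs in $P$, a standard compactness argument shows that $(s,P) \mapsto G^{s}(P)$ is jointly continuous on $[0,s_{0}) \times \mathcal{G}r(M)$. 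Uniform continuity on a compact slab then yields $s_{1} \in (0,s_{0})$ such that $G^{s}(P) \geq \delta/2$ for every $P \in \overline{\mathcal{U}} \setminus \mathcal{V}$ and every $s \in [0,s_{1}]$.

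Putting the two cases together, for $s \in (0,\min\{s_{0},s_{1}\})$ and any $P \in \overline{\mathcal{U}}$ we have $G^{s}(P) \geq \min\{cs,\delta/2\} > 0$, so $P$ is $G^{s}$-generic, which is exactly the conclusion. The only technical point is the joint continuity of $G^{s}(P)$, for which one must verify that a parametric maximum of a continuous function over the $g_{s}$-unit sphere bundle of $P$ is itself continuous in $(s,P)$; this is routine once one fixes a background metric to normalize vectors and uses that all $g_{s}$-induced norms are uniformly equivalent on a compact parameter interval. I do not expect any serious obstacle beyond this bookkeeping.
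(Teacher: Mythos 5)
Your proof is correct and takes essentially the same approach as the paper's: both decompose $\overline{\mathcal{U}}$ into $\mathcal{V}$ (handled by hypothesis \ref{lem-C-inequality}) and its compact complement, extract a uniform positive lower bound for $G^{0}$ there, and propagate it to small $s$ by continuity. You supply a few extra details the paper leaves implicit (why $G^{0}>0$ off $\mathcal{V}$, and the joint continuity of $(s,P)\mapsto G^{s}(P)$), but the argument is the same.
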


In particular, when $\mathcal{U}=\mathcal{G}r\left( M\right) $, Lemma \ref%
{lem-C} gives a criterion for a global deformation to a metric that is $G$
--generic.

\begin{proof}
Since $\overline{\mathcal{U\setminus V}}$ is compact, there is a $\delta >0$
such that for all $P\in \overline{\mathcal{U\setminus V}},$ 
\begin{equation*}
G^{0}\left( P\right) >\delta .
\end{equation*}%
By continuity, there is an $s_{1}>0$ so that for all $P\in \overline{%
\mathcal{U\setminus V}}$ and all $s\in \left( 0,s_{1}\right) ,$ 
\begin{equation*}
G^{s}\left( P\right) >\delta .
\end{equation*}%
Combining this with Inequality \ref{lem-C-inequality}, it follows that for
all sufficiently small $s$, every plane in $\overline{\mathcal{U}}$ is $%
G^{s} $--generic.
\end{proof}

With one minor modification, the proof we present here gives an alternative
proof of the special case of Theorem \ref{murphWilh}, when $q\geq 3.$ The
reader who is familiar with \cite{MurphWilh} might also recognize other
similarities between the proof we present here and the one in that paper. In
fact, a quick summary of our proof is to use the ideas of \cite{MurphWilh}
with the addition of $\left\vert \left( \nabla _{v}R_{v}\right) \left(
w\right) ^{P\perp }\right\vert $ in Definition \ref{G Generic}.

Since we suspect that these comments alone will not be convincing to many
readers, we detail the rest of the proof below. We have prioritized having a
simple, clear, and self-contained exposition over attempting to avoid some
textual overlap with \cite{MurphWilh}. Having acknowledged this debt to \cite%
{MurphWilh}, we make no further mention of it.

The proof of the Density Assertion begins by establishing a local version of
it, which is the main lemma of the paper (\ref{local-main} below). For the
idea behind the proof of the main lemma, suppose that a plane $P\subset
T_{p}M$ is $G$--rigid with orthonormal basis $\left\{ v,T,N\right\} \subset
T_{p}M,$ with $v,T$ tangent to $P,$ and $n\perp P$. We perturb $\left\langle
n,T\right\rangle $ by a function $f$ that has either a large second or third
derivative in the $v$-direction on a neighborhood of $p$. Then either $%
R(T,v)v$ or $\left( \nabla _{v}R\right) \left( T,v\right) v$ will have a
large component in the $n$-direction. Thus, $P$ is no longer $G$ --rigid.

Section \ref{notation section} establishes the notations and conventions. In
Section \ref{local construct section}, we state and prove the main lemma (%
\ref{local-main}), which as explained above, is a local version of the
Density Assertion. In Section \ref{Global section}, we explain how to
combine Lemma \ref{lem-C} and the main lemma (\ref{local-main}) to prove the
Density Assertion and hence Theorems \ref{main thm} and \ref{G generic them}.

\begin{acknowledgment}
We are grateful to Paula Bergen for copy editing this manuscript.
\end{acknowledgment}

%---------------------------------------------------------------------------------------------------------------------------------%

\section{Notation\label{notation section}}

Let $(M,g)$ be a compact Riemannian manifold of dimension $3$. The
Riemannian connection, curvature tensor, and Christoffel symbols are denoted
by $\nabla $, $R$, and $\Gamma $.

Given local coordinates $\{x_{i}\}_{i=1}^{3}$, we write $\partial _{i}$ to
denote the partial derivative operator, $\frac{\partial }{\partial x_{i}}$.

Let $\mathcal{G}r\mathcal{(M)}$ denote the Grassmannian of $2$-planes in $M$%
, and let $\pi :\mathcal{G}r\mathcal{(M)\rightarrow M}$ be the projection of 
$\mathcal{G}r\mathcal{(M)}$ to $M$. As in \cite{MurphWilh}, we fix a metric
on $\mathcal{G}r\mathcal{(M)}$ so that $\pi $ is a Riemannian submersion
with totally geodesic fibers that are isometric to the Grassmannian of $2$
-planes in $\mathbb{R}^{3}$. For a metric space $X$, $A\subset X$, and $r>0$
, let 
\begin{equation*}
B_{r}(A)=\{x\in X:\operatorname{dist}(x,A)<r\}.
\end{equation*}

Let $\mathcal{R}_{0}$ be the set of $G$--rigid 2-planes for $g$.

We put a bar $\overline{}$ over a set to denote its closure. Thus $\bar{A}$
is the closure of $A.$

Throughout Section \ref{local construct section} we fix a coordinate
neighborhood $U$ that has the property that $\bar{U}$ has a neighborhood $V$
which is also a coordinate neighborhood. We use the notation $O\left(
\varepsilon \right) $ to denote a quantity whose absolute value is no larger
than $C\varepsilon ,$ where $C$ is a positive constant that only depends on $%
U$ and $V.$

%---------------------------------------------------------------------------------------------------------------------------------%

\section{The Local Construction\label{local construct section}}

In this section, we state and prove the main lemma of the paper, which gives
the method to perform our local deformation.

\begin{Main Lemma}
\label{local-main} %Main local lemma
Given $K,\eta >0$ and sufficiently small $\rho >0$, there are $\xi
,s_{0},\eta >0$ such that if 
\begin{equation*}
\left\vert g-\hat{g}\right\vert _{C^{3}}<\xi
\end{equation*}%
and $P$ is a $G$--rigid plane for $\hat{g},$ then there is a $C^{\infty }$
-family of metrics $\{g_{s}\}_{s\in \lbrack 0,s_{0}]}$ so that the following
hold.

\begin{enumerate}
\item $g_{0}=\hat{g}.$

\item For all $s$, $g_{s}=\hat{g}$ on $M\setminus B_{\rho +\eta }(\pi (P))$.

\item Let $\sigma (P)$ be the section of $\mathcal{G}r\mathcal{(}B_{\rho } 
\mathcal{(\pi (P)))}$ determined by $P$ via normal coordinates at $\pi (P)$
with respect to $g$. For all $s\in \left( 0,s_{0}\right) $ and $\check{P}\in
\pi ^{-1}(B_{\rho }(\pi (P)))\cap B_{\rho }(\sigma (P)),$ 
\begin{equation*}
G^{s}\left( \check{P}\right) >Ks,
\end{equation*}
where $G^{s}$ is the generic plane operator for $g_{s}$.
\end{enumerate}
\end{Main Lemma}

The proof begins with the following construction of a function on a
coordinate neighborhood.

\begin{lemma}
\label{local-M}There is a \ $K_{0}>1$ with the following property. Given $%
K>K_{0},$ $\varepsilon >0,$ $p\in M,$ and coordinate neighborhoods $U$ and $%
V $ of $p$ with $\overline{U}\subset V,$ there is a $C^{\infty }$ function $%
f:M\rightarrow \mathbb{R}$ so that

\begin{enumerate}
\item On $M$ 
\begin{equation*}
\left\vert f\right\vert _{C^{1}}<\varepsilon .
\end{equation*}

\item Unless $i=j=1,$ 
\begin{equation*}
\left\vert \partial _{j}\partial _{k}f\right\vert <\varepsilon .
\end{equation*}

\item For $i,j\in \{1,2,3\}$ and $k\in \{2,3\}$, on $U,$ 
\begin{equation*}
\partial _{i}\partial _{j}\partial _{k}f=0
\end{equation*}
and on $V$ 
\begin{equation}
\left\vert \partial _{1}\partial _{1}f\right\vert <K.  \label{pure 1 on V}
\end{equation}

\item $f\equiv 0$ on $M\setminus V$.

\item On $U,$ if 
\begin{equation*}
\left\vert \partial _{1}\partial _{1}f\right\vert \leq \sqrt{K},
\end{equation*}
then 
\begin{equation*}
\left\vert \partial _{1}\partial _{1}\partial _{1}f\right\vert \geq K.
\end{equation*}
In particular, on $U$ 
\begin{equation}
\max \left\{ \left\vert \partial _{1}\partial _{1}f\right\vert ,\left\vert
\partial _{1}\partial _{1}\partial _{1}f\right\vert \right\} >\sqrt{K}\text{
. }  \label{max on U}
\end{equation}
\end{enumerate}
\end{lemma}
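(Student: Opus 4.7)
The plan is to set $f(x)=A\sin(\omega x_1)\,\psi(x)$ on $V$, in the given coordinates, extended by $0$ outside $V$, where $\psi$ is a smooth cutoff equal to $1$ on a neighborhood of $\overline{U}$ and supported in $V$, with $|\psi|_{C^3}\leq C_0$ a constant depending only on $U$ and $V$. Property 4 is then immediate, and on $U$, where $\psi\equiv 1$, the function $f=A\sin(\omega x_1)$ depends only on $x_1$, so $\partial_i\partial_j\partial_k f=0$ whenever $k\in\{2,3\}$, which gives the first half of property 3.

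The amplitude $A$ and frequency $\omega$ will be chosen so that $A\omega^2=2\sqrt{K}$ with $\omega$ large. On $U$ we have $\partial_1^2 f=-A\omega^2\sin(\omega x_1)$ and $\partial_1^3 f=-A\omega^3\cos(\omega x_1)$; the Pythagorean identity $\sin^2+\cos^2=1$ delivers property 5: if $|\partial_1^2 f|\leq\sqrt{K}$, then $|\sin(\omega x_1)|\leq\tfrac{1}{2}$, so $|\cos(\omega x_1)|\geq\tfrac{\sqrt{3}}{2}$, and hence $|\partial_1^3 f|\geq\sqrt{3K}\,\omega$, which exceeds $K$ provided $\omega\geq\sqrt{K/3}$. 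The upper bound $|\partial_1^2 f|\leq 2\sqrt{K}<K$ on $U$ is then automatic for $K$ larger than some universal $K_0$.

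The remaining estimates follow by taking $\omega$ sufficiently large in terms of $K$, $\varepsilon$, and $C_0$. Expanding $\partial_i\partial_j f$ on $V$ by the Leibniz rule, every term that picks up a derivative of $\psi$ carries a factor of $A$, $A\omega$, or $A\omega^2$; the first two are $O(\sqrt{K}/\omega)$, since $A\omega^2$ is held fixed, while the last is $O(\sqrt{K})$. For the $(1,1)$-derivative this yields $|\partial_1^2 f|\leq 2\sqrt{K}+O(\sqrt{K}/\omega)<K$ on $V$, completing property 3; for any $(i,j)\neq(1,1)$, no $A\omega^2$ term appears, so $|\partial_i\partial_j f|=O(C_0\sqrt{K}/\omega)<\varepsilon$ once $\omega\gtrsim C_0\sqrt{K}/\varepsilon$, giving property 2, and property 1 is handled in the same way. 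The main obstacle is the apparent tension between property 2's smallness constraint, which naively bounds $A\omega$, and property 5's demand that both $A\omega^2$ and $A\omega^3$ be large; this is resolved precisely because $A\omega=2\sqrt{K}/\omega\to 0$ and $A\omega^3=2\sqrt{K}\,\omega\to\infty$ as $\omega\to\infty$ with $A\omega^2$ held fixed.
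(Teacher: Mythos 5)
Your construction $f = A\sin(\omega x_1)\psi$ with $A\omega^2 = 2\sqrt{K}$ and $\omega\to\infty$ is exactly the paper's construction, which takes $\tilde f = \chi\cdot(h\circ\pi_1)$ with $h(t) = \tfrac12 K\eta^2\sin(t/\eta)$, $\eta = \varepsilon/K$ --- i.e., amplitude $A = \tfrac12 K\eta^2$, frequency $\omega = 1/\eta$, so $A\omega^2 = K/2$. The paper merely factors the single-variable estimates into a separate Lemma~\oldref{local-R} and ties $\omega$ to $\varepsilon/K$ rather than leaving it a free large parameter, but the idea, the Leibniz-rule bookkeeping, and the $\sin^2+\cos^2=1$ trick for Part~5 are all the same; your numerical constants also work.
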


The proof of Lemma \ref{local-M} relies on the following single variable
calculus result.

\begin{lemma}
\label{local-R} %constructs functions on R
There is a \ $K_{0}>1$ and for all $K>K_{0},$ and $\varepsilon \in (0,1)$,
there is a $C^{\infty }$ function $h:\mathbb{R}\rightarrow \mathbb{R}$ with
the following properties:

\begin{enumerate}
\item The $C^{1}$--norm of $h$ is bounded from above by $\varepsilon ,$ that
is, 
\begin{equation}
\left\vert h\right\vert _{C^{1}}<\varepsilon .  \label{local-R-eqn3}
\end{equation}

\item If 
\begin{equation*}
|h^{\prime \prime }(t)|\leq \sqrt{K},
\end{equation*}%
then 
\begin{equation*}
|h^{\prime \prime \prime }(t)|\geq K.
\end{equation*}%
In particular, 
\begin{equation}
\sqrt{K}<\max \{|h^{\prime \prime }(t)|,|h^{\prime \prime \prime }(t)|\}.
\end{equation}

\item For all $t,$ 
\begin{equation}
|h^{\prime \prime }(t)|\leq \frac{1}{2}K.  \label{h 2nd deriv upper bnd}
\end{equation}
\end{enumerate}
\end{lemma}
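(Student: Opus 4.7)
The plan is to take $h$ to be a pure sinusoid, $h(t) = a\sin(\omega t)$, with a small amplitude $a$ and a large frequency $\omega$, both chosen as functions of $K$ and $\varepsilon$. The virtue of this ansatz is the Pythagorean identity
\[
\left(\frac{h''(t)}{a\omega^{2}}\right)^{2} + \left(\frac{h'''(t)}{a\omega^{3}}\right)^{2} = 1,
\]
which forces $|h''|$ and $|h'''|$ to trade off against each other: whenever $|h''|$ is much smaller than its maximum $a\omega^{2}$, the quantity $|h'''|$ must be close to its maximum $a\omega^{3}$. This is precisely the dichotomy that (2) asks for.

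The three required properties translate into three inequalities in $a$ and $\omega$: condition (1) demands $a\omega < \varepsilon$; condition (3) demands $a\omega^{2} \leq K/2$; and condition (2), after solving the Pythagorean identity for $h'''$, reduces to $a\omega^{3}\sqrt{1 - K/(a\omega^{2})^{2}} \geq K$ whenever the hypothesis is nonvacuous. I would set $\omega := 4\sqrt{K}/\varepsilon$ and $a := \varepsilon/(2\omega)$, so that $a\omega = \varepsilon/2$, $a\omega^{2} = 2\sqrt{K}$, and $a\omega^{3} = 8K/\varepsilon$. Then (1) is immediate from $\max(a,a\omega) \leq \varepsilon/2$; condition (3) holds as soon as $2\sqrt{K} \leq K/2$, i.e.\ $K \geq 16$; and for (2), the hypothesis $|h''(t)| \leq \sqrt{K}$ becomes $|\sin(\omega t)| \leq 1/2$, forcing $|\cos(\omega t)| \geq \sqrt{3}/2$, and hence
\[
|h'''(t)| \geq \tfrac{\sqrt{3}}{2}\,a\omega^{3} \;=\; \frac{4\sqrt{3}\,K}{\varepsilon} \;>\; K,
\]
since $\varepsilon < 1$. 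Taking $K_{0} := 16$ then makes everything work uniformly for $K \geq K_{0}$ and $\varepsilon \in (0,1)$.

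I do not anticipate any real obstacle: the whole lemma is bookkeeping around $\sin^{2}+\cos^{2}=1$, and smoothness of $h$ is automatic. The only subtlety worth flagging is that $a\omega^{2}$ must exceed $\sqrt{K}$ by a \emph{definite} factor rather than just infinitesimally. If $a\omega^{2} = \sqrt{K}$ exactly, the hypothesis of (2) would hold for every $t$, forcing $|h'''|\geq K$ everywhere, which is impossible because $h'''$ oscillates to zero. The margin built into the choice $a\omega^{2} = 2\sqrt{K}$ is exactly what is needed to keep $|\cos(\omega t)|$ bounded below on the relevant set and close the argument.
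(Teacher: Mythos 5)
Your proposal is correct and takes essentially the same route as the paper: both use a pure sinusoid $h(t) = a\sin(\omega t)$ with amplitude and frequency tuned so that $a\omega \lesssim \varepsilon$, $a\omega^2 \leq K/2$, and $a\omega^3$ dominates $K$, with the Pythagorean identity doing the work for Part~2. The paper takes $\omega = K/\varepsilon$ and $a = \varepsilon^2/(2K)$ so that $a\omega^2 = K/2$ exactly, whereas you take $\omega = 4\sqrt{K}/\varepsilon$ so that $a\omega^2 = 2\sqrt{K}$; this is a minor reparametrization, not a different argument, and your version has the small advantage of yielding an explicit $K_0 = 16$ where the paper leaves the threshold implicit.
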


\begin{proof}
For $\eta =\frac{\varepsilon }{K}$, let 
\begin{equation*}
h\left( t\right) =\frac{1}{2}K\eta ^{2}\sin \left( \frac{t}{\eta }\right) .
\end{equation*}
Then 
\begin{eqnarray*}
\left\vert h\left( t\right) \right\vert &\leq &\frac{1}{2}K\eta ^{2}=\frac{1 
}{2}K\frac{\varepsilon ^{2}}{K^{2}}<\varepsilon ,\text{ and} \\
h^{\prime }\left( t\right) &=&\frac{1}{2}K\eta \cos \left( \frac{t}{\eta }
\right) ,\text{ so} \\
\left\vert h^{\prime }\left( t\right) \right\vert &\leq &\frac{1}{2}
\varepsilon ,\text{ and} \\
\left\vert h\right\vert _{C^{1}} &<&\varepsilon ,
\end{eqnarray*}
proving Part 1.

Also,

\begin{equation*}
h^{\prime \prime }\left( t\right) =-\frac{1}{2}K\sin \left( \frac{t}{\eta }
\right) .
\end{equation*}
So 
\begin{equation*}
\left\vert h^{\prime \prime }\left( t\right) \right\vert \leq \frac{1}{2}K,
\end{equation*}
proving Part 3.

We also have 
\begin{eqnarray*}
h^{\prime \prime \prime }\left( t\right) &=&-\frac{1}{2}\frac{K}{\eta }\cos
\left( \frac{t}{\eta }\right) \\
&=&-\frac{1}{2}\frac{K^{2}}{\varepsilon }\cos \left( \frac{t}{\eta }\right) .
\end{eqnarray*}%
Thus, if 
\begin{eqnarray*}
\left\vert h^{\prime \prime }\left( t\right) \right\vert &\leq &\sqrt{K},%
\text{ then} \\
\frac{1}{4}K^{2}\sin ^{2}\left( \frac{t}{\eta }\right) &\leq &K,\text{ so} \\
1-\cos ^{2}\left( \frac{t}{\eta }\right) &=&\sin ^{2}\left( \frac{t}{\eta }%
\right) \leq \frac{4}{K},
\end{eqnarray*}%
and 
\begin{eqnarray*}
\left\vert h^{\prime \prime \prime }\left( t\right) \right\vert ^{2} &=&%
\frac{1}{4}\frac{K^{4}}{\varepsilon ^{2}}\cos ^{2}\left( \frac{t}{\eta }%
\right) \\
&\geq &\frac{1}{4}\frac{K^{4}}{\varepsilon ^{2}}\left( 1-\frac{4}{K}\right)
\\
&>&K^{2},
\end{eqnarray*}%
if $K$ is sufficiently large and $\varepsilon $ is sufficiently small,
proving Part 2.
\end{proof}

\begin{proof}[Proof of Lemma \protect\ref{local-M}]
%Proof of constructing functions on M
Let $\Phi :V\longrightarrow \mathbb{R}^{3}$ be the coordinate chart. Let $%
\pi _{1}:\mathbb{R}^{3}\rightarrow \mathbb{R}$ be the orthogonal projection
onto the first factor. Let $\chi :\mathbb{R}^{3}\rightarrow \lbrack 0,1]$ be 
$C^{\infty }$ and satisfy 
\begin{align}
\chi |_{\Phi \left( U\right) }& \equiv 1,  \notag \\
\chi |_{\mathbb{R}^{3}\setminus \left\{ \Phi \left( V\right) \right\} }&
\equiv 0.  \label{local-M-eqn-compact-support}
\end{align}
Let $m_{3}>1$ satisfy 
\begin{equation}
\left\vert \chi \right\vert _{C^{3}}\leq m_{3}.  \label{local-M-eqn-C23bound}
\end{equation}
Choose a $C^{\infty }$ function $h:\mathbb{R}\rightarrow \mathbb{R}$ that
satisfies the conclusion of Lemma \ref{local-R} with $\varepsilon $ replaced
by $\frac{\varepsilon }{3m_{3}}.$ Thus 
\begin{equation}
\left\vert h\right\vert _{C^{1}}\leq \frac{\varepsilon }{3m_{3}},
\label{local-M-eqn-C1bound}
\end{equation}
\begin{equation}
\max \{|h^{\prime \prime }(t)|,|h^{\prime \prime \prime }(t)|\}>\sqrt{K},
\label{local-M-eqn-hprimebound}
\end{equation}
and 
\begin{equation}
|h^{\prime \prime }(t)|<\frac{1}{2}K,
\end{equation}
and 
\begin{equation*}
\text{if }|h^{\prime \prime }(t)|\leq \sqrt{K},\text{ then }|h^{\prime
\prime \prime }(t)|\geq 2K.
\end{equation*}

Let $\tilde{f}:\mathbb{R}^{3}\rightarrow \mathbb{R}$ be defined by 
\begin{equation}
\tilde{f}(p)=\chi (p)\cdot (h\circ \pi _{1})(p),  \label{local-M-eqn-ftilde}
\end{equation}
and let $f:M\rightarrow \mathbb{R}$ be 
\begin{equation*}
f=\tilde{f}\circ \Phi .
\end{equation*}
Since $\chi |_{\mathbb{R}^{3}\setminus \left\{ \Phi \left( V\right) \right\}
}\equiv 0,$ $f\equiv 0$ on $M\setminus V$, and Part 4 holds.

Noting that 
\begin{equation}
\partial _{k}\tilde{f}(p)=\partial _{k}\chi (p)\cdot \left( h\circ \pi
_{1}\right) (p)+\chi (p)\cdot \partial _{k}\left( h_{i}\circ \pi _{1}\right)
(p),  \label{1st order partials}
\end{equation}
$\left\vert h\right\vert _{C^{1}}\leq \frac{\varepsilon }{3m_{3}},$ and $%
\left\vert \chi \right\vert _{C^{3}}\leq m_{3},$ we conclude that 
\begin{equation*}
\left\vert f\right\vert _{C^{1}}<\varepsilon ,
\end{equation*}
so Part 1 holds.

From (\ref{1st order partials}) we get 
\begin{align}
\partial _{j}\partial _{k}\tilde{f}(p)& =\partial _{j}\partial _{k}\chi
(p)\cdot \left( h\circ \pi _{1}\right) (p)+\partial _{j}\chi (p)\cdot
\partial _{k}\left( h\circ \pi _{1}\right) (p)  \notag \\
& +\partial _{k}\chi (p)\cdot \partial _{j}\left( h\circ \pi _{1}\right)
(p)+\chi (p)\cdot \partial _{j}\partial _{k}\left( h\circ \pi _{1}\right) (p)
\label{2nd partials}
\end{align}

and 
\begin{align}
\partial _{i}\partial _{j}\partial _{k}\tilde{f}(p)& =\partial _{i}\partial
_{j}\partial _{k}\chi (p)\cdot \left( h\circ \pi _{1}\right) (p)+\partial
_{j}\partial _{k}\chi (p)\cdot \partial _{i}\left( h\circ \pi _{1}\right) (p)
\notag \\
& +\partial _{i}\partial _{j}\chi (p)\cdot \partial _{k}\left( h\circ \pi
_{1}\right) (p)+\partial _{j}\chi (p)\cdot \partial _{i}\partial _{k}\left(
h\circ \pi _{1}\right) (p)  \notag \\
& +\partial _{i}\partial _{k}\chi (p)\cdot \partial _{j}\left( h\circ \pi
_{1}\right) (p)+\partial _{k}\chi (p)\cdot \partial _{i}\partial _{j}\left(
h\circ \pi _{1}\right) (p)  \notag \\
& +\partial _{i}\chi (p)\cdot \partial _{j}\partial _{k}\left( h\circ \pi
_{1}\right) (p)+\chi (p)\cdot \partial _{i}\partial _{j}\partial _{k}\left(
h\circ \pi _{1}\right) (p).  \label{third partials}
\end{align}

In particular, since $\chi |_{\Phi \left( U\right) }\equiv 1,$ on $\Phi
\left( U\right) ,$ 
\begin{equation*}
\partial _{j}\partial _{k}\tilde{f}(p)=\partial _{j}\partial _{k}\left(
h\circ \pi _{1}\right) (p)\text{ and }\partial _{i}\partial _{j}\partial _{k}%
\tilde{f}(p)=\partial _{i}\partial _{j}\partial _{k}\left( h\circ \pi
_{1}\right) (p).
\end{equation*}%
Part 5 follows from this and Lemma \ref{local-R}. Since $\left\vert
h\right\vert _{C^{1}}\leq \frac{\varepsilon }{3m_{3}}$ and $\left\vert \chi
\right\vert _{C^{3}}\leq m_{3},$ we conclude from (\ref{2nd partials}) that 
\begin{equation*}
\left\vert \chi (p)\cdot \partial _{j}\partial _{k}\left( h\circ \pi
_{1}\right) (p)-\partial _{j}\partial _{k}\tilde{f}(p)\right\vert
<\varepsilon .
\end{equation*}%
Inequality (\ref{pure 1 on V}) follows from this and Inequality (\ref{h 2nd
deriv upper bnd}), provided $\varepsilon $ is sufficiently small and $K$ is
sufficiently large. Similarly, if either $i$ or $j$ is different from $1,$
then $\partial _{j}\partial _{k}\left( h\circ \pi _{1}\right) =0,$ so 
\begin{equation*}
\left\vert \partial _{j}\partial _{k}\tilde{f}(p)\right\vert <\varepsilon ,
\end{equation*}%
proving Part 2.
\end{proof}

%--------------------------------%
%		LOCAL PROOF SETUP		 %
%--------------------------------%

To continue the proof of the main lemma (\ref{local-main}) let $P\in 
\mathcal{R}_{0}$ and $\hat{g}$ be as in \ref{local-main} and have foot point 
$p$. Let $\{v,T,n\}$ be an ordered $\hat{g}$--orthonormal triplet at $p$
with 
\begin{equation}
{v,T}\in P\text{ and }n\text{ normal to }P.  \label{local-frame-text}
\end{equation}%
Let $\{E_{i}\}_{i=1}^{3}$ be an ordered coordinate frame that is defined on
a coordinate neighborhood 
\begin{equation}
U\text{ of }p\text{ with }E_{1}\left( p\right) =v,E_{2}\left( p\right) =T,%
\text{ and }E_{3}\left( p\right) =n.  \label{U dfn}
\end{equation}%
We further assume, as in Lemma \ref{local-M}, that the coordinate chart of $%
U $ is defined on a neighborhood $V$ of the closure of $U.$

Choose $g_{s}$ so that with respect to $\{E_{i}\}_{i=1}^{3}$, the matrix of $%
g_{s}-\hat{g}$ is 
\begin{equation}
\begin{pmatrix}
0 & 0 & 0 \\ 
0 & 0 & sf \\ 
0 & sf & 0%
\end{pmatrix}
\label{gs mibnus g hat}
\end{equation}%
where $f$ was constructed via Lemma \ref{local-M} with $U=B_{\rho }(\pi (P))$
.

We write $\tilde{g}$ for $g_{s}$ and use $\tilde{}$ for objects associated
with $\tilde{g}$. With respect to $\{E_{i}\}_{i=1}^{3}$, the Christoffel
symbols of the first kind are 
\begin{equation*}
\tilde{\Gamma}_{ij,k}=\tilde{g}\left( \tilde{\nabla}_{E_{i}}E_{j},E_{k}%
\right) .
\end{equation*}%
To emphasize the special role of our first coordinate vector field we will
write \textquotedblleft $v$\textquotedblright\ for the index
\textquotedblleft $1$\textquotedblright\ that corresponds to $E_{1}.$

Letting $\tilde{R}_{ijkl}:=\tilde{R}\left( E_{i},E_{j},E_{k},E_{l}\right) ,$
we then have 
\begin{equation}
\tilde{R}_{ijkl}=\partial _{i}\tilde{\Gamma}_{jk,l}-\partial _{j}\tilde{%
\Gamma}_{ik,l}+\tilde{g}^{\sigma \tau }\left( \tilde{\Gamma}_{ik,\sigma }%
\tilde{\Gamma}_{jl,\tau }-\tilde{\Gamma}_{jk,\sigma }\tilde{\Gamma}_{il,\tau
}\right) ,  \label{curvature}
\end{equation}%
where the Einstein summation convention is used on indices $\sigma $ and $%
\tau $ (see, for example, page 89 of \cite{Pet}).

We calculate $\tilde{\nabla}\tilde{R}$ in terms of Christoffel symbols of
the second kind which are 
\begin{equation}
\tilde{\nabla}_{E_{i}}E_{j}=\Gamma _{ij}^{k}E_{k},  \label{first kind}
\end{equation}
where the Einstein summation convention is used. We then have that $\left( 
\tilde{\nabla}_{v}\tilde{R}\right) \left( E_{2},v,v,E_{3}\right) $ is

\begin{eqnarray*}
\left( \tilde{\nabla}_{v}\tilde{R}\right) \left( E_{2},v,v,E_{3}\right) &:&= 
\tilde{\nabla}_{v}\left( \tilde{R}\left( E_{2},v,v,E_{3}\right) \right) - 
\tilde{R}\left( \tilde{\nabla}_{v}E_{2},v,v,E_{3}\right) \\
&&-\tilde{R}\left( E_{2},\tilde{\nabla}_{v}v,v,E_{3}\right) -\tilde{R}\left(
E_{2},v,\tilde{\nabla}_{v}v,E_{3}\right) \\
&&-\tilde{R}\left( E_{2},v,v,\tilde{\nabla}_{v}E_{3}\right) .
\end{eqnarray*}
Together with (\ref{first kind}) this gives us

\begin{eqnarray}
2\left( \tilde{\nabla}_{v}\tilde{R}\right) _{2vv3} &=&2\left( \partial _{v} 
\tilde{R}_{2vv3}-\tilde{\Gamma}_{v2}^{\tau }\tilde{R}_{\tau vv3}-\tilde{
\Gamma}_{vv}^{\tau }\tilde{R}_{2\tau v3}-\tilde{\Gamma}_{vv}^{\tau }\tilde{R}
_{2v\tau 3}-\tilde{\Gamma}_{v3}^{\tau }\tilde{R}_{2vv\tau }\right)
\label{cov-curvature2} \\
&=&2\partial _{v}\tilde{R}_{2vv3}-\tilde{g}^{\tau \mu }\tilde{\Gamma}
_{v2,\mu }\tilde{R}_{\tau vv3}-\tilde{g}^{\tau \mu }\tilde{\Gamma}_{vv,\mu } 
\tilde{R}_{2\tau v3}-\tilde{g}^{\tau \mu }\tilde{\Gamma}_{vv,\mu }\tilde{R}
_{2v\tau 3}-\tilde{g}^{\tau \mu }\tilde{\Gamma}_{v3,\mu }\tilde{R}_{2vv\tau
},  \notag
\end{eqnarray}
where we use the Einstein convention and the formula 
\begin{equation*}
\Gamma _{ij}^{k}=\frac{1}{2}g^{kl}\Gamma _{ij,l}
\end{equation*}
on the bottom of page 66 of \cite{Pet} to convert between the two
Christoffel types.

Before stating the next three results we remind the reader that we have
suppressed the role of $s$ by writing $\tilde{g}$ for $g_{s}$ and use $%
\tilde{}$ for objects associated to $\tilde{g}$. For example, this is
important to keep in mind when reading Part 3 of the following result.

\begin{proposition}
\label{setup-prop} %Proposition that directly leads into main local proof
Let $U$ be a coordinate neighborhood as in (\ref{U dfn}), let $i,j,k,l$ be
arbitrary elements of $\{1,2,3\}$, and let $\varepsilon >0$ be as in Lemma %
\ref{local-M}.

\begin{enumerate}
\item Writing $(\tilde{\Gamma}-\hat{\Gamma})_{ij,k}$ for $\tilde{\Gamma}
_{ij,k}-\hat{\Gamma}_{ij,k}$, we have 
\begin{equation}
\left( \left\vert \tilde{\Gamma}-\hat{\Gamma}\right\vert \right) _{ij,k}\leq
O(\varepsilon s).  \label{gamma-ineq}
\end{equation}

\item We have 
\begin{eqnarray}
\partial _{v}(\tilde{\Gamma}-\hat{\Gamma})_{2v,3} &=&\partial _{v}(\tilde{%
\Gamma}-\hat{\Gamma})_{v3,2}=-\partial _{v}(\tilde{\Gamma}-\hat{\Gamma}%
)_{23,v}=\partial _{v}(\tilde{\Gamma}-\hat{\Gamma})_{v2,3}  \notag \\
&=&\partial _{v}(\tilde{\Gamma}-\hat{\Gamma})_{3v,2}=-\partial _{v}(\tilde{%
\Gamma}-\hat{\Gamma})_{32,v}  \notag \\
&=&\frac{s}{2}\partial _{v}\partial _{v}f,  \label{big Christ deriv}
\end{eqnarray}%
and all of the other expressions 
\begin{equation*}
\partial _{i}(\tilde{\Gamma}-\hat{\Gamma})_{jk,l}
\end{equation*}%
with $i,j,k\in \left\{ 1,2,3\right\} $ have absolute value $<O(\varepsilon
s) $.

\item There is a $C>0$ so that for all $s\in \left[ 0,1\right] ,$ 
\begin{equation*}
\left\vert \tilde{\Gamma}_{ij,k}\right\vert \leq C\text{ and }\left\vert 
\hat{\Gamma}_{ij,k}\right\vert \leq C
\end{equation*}%
throughout $U.$

\item We have 
\begin{eqnarray}
\partial _{v}\partial _{v}(\tilde{\Gamma}-\hat{\Gamma})_{2v,3} &=&\partial
_{v}\partial _{v}(\tilde{\Gamma}-\hat{\Gamma})_{v3,2}=-\partial _{v}\partial
_{v}(\tilde{\Gamma}-\hat{\Gamma})_{23,v}=\partial _{v}\partial _{v}(\tilde{
\Gamma}-\hat{\Gamma})_{v2,3}  \notag \\
&=&\partial _{v}\partial _{v}(\tilde{\Gamma}-\hat{\Gamma})_{3v,2}=-\partial
_{v}\partial _{v}(\tilde{\Gamma}-\hat{\Gamma})_{32,v}  \notag \\
&=&\frac{s}{2}\partial _{v}\partial _{v}\partial _{v}f.
\label{big 2nd deriv}
\end{eqnarray}

Thus by Part 5 of Proposition \ref{local-M}, if $\partial _{v}(\tilde{\Gamma}
-\hat{\Gamma})_{2v,3}\leq 2\sqrt{K}s$, then $\partial _{v}\partial _{v}( 
\tilde{\Gamma}-\hat{\Gamma})_{2v,3}\geq 2Ks$ on $U$.
\end{enumerate}
\end{proposition}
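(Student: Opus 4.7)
The plan is bookkeeping from the definition of the Christoffel symbols of the first kind. Setting $h_{ij} := (g_s - \hat g)_{ij}$, the matrix in \ref{gs mibnus g hat} gives $h_{23} = h_{32} = sf$ with all other entries zero, so
\[
(\tilde\Gamma - \hat\Gamma)_{ij,k} = \frac{1}{2}\bigl(\partial_i h_{jk} + \partial_j h_{ik} - \partial_k h_{ij}\bigr).
\]
Each part of the proposition then reduces to tracking which of the three summands survive after further differentiation, and invoking the size bounds on $f$ from Lemma \ref{local-M}.

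Part 1 is immediate from $|f|_{C^1} < \varepsilon$, which is Part 1 of Lemma \ref{local-M}: each summand has absolute value at most $s\varepsilon$. Part 3 I would extract from the hypothesis $|g - \hat g|_{C^3} < \xi$, together with the smoothness of $g$ on the compact set $\bar U$, which bounds $|\hat\Gamma|$ uniformly there; Part 1 then propagates the same bound to $\tilde\Gamma$ uniformly in $s \in [0,1]$.

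For Parts 2 and 4 I would apply $\partial_v$, respectively $\partial_v^2$, to the displayed formula. Part 2 of Lemma \ref{local-M} says that the only second partial of $f$ that can be large is $\partial_v^2 f$, and Part 3 of the lemma says that on $U$ the only third partial of $f$ that survives is $\partial_v^3 f$, since $\partial_i \partial_j \partial_k f$ vanishes there whenever $k \in \{2,3\}$, hence by symmetry whenever any index lies in $\{2,3\}$. So a summand can contribute a large term only when the outer derivatives together with the inner $\partial_j$, $\partial_k$, or $\partial_l$ combine into $\partial_v^2$, respectively $\partial_v^3$, acting on the factor $f$ in $h_{23} = h_{32} = sf$. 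This forces exactly one of the indices $j,k,l$ to equal $v$ while the other two form $\{2,3\}$, which yields precisely the six triples listed in \ref{big Christ deriv} and \ref{big 2nd deriv}. The single sign flip occurs precisely when $l = v$, so the $-\partial_l h_{jk}$ summand is the active one. Every remaining index combination is $O(\varepsilon s)$, because each surviving partial of $f$ then hits the $\varepsilon$ bound from Part 2 of Lemma \ref{local-M}.

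The final dichotomy asserted in Part 4 is then just Part 5 of Lemma \ref{local-M} transported across the identities $\partial_v(\tilde\Gamma - \hat\Gamma)_{2v,3} = \tfrac{s}{2}\partial_v^2 f$ and $\partial_v^2(\tilde\Gamma - \hat\Gamma)_{2v,3} = \tfrac{s}{2}\partial_v^3 f$. I do not anticipate a genuine obstacle anywhere; the only place to exercise care is Part 4, where one must use the full strength of Part 3 of Lemma \ref{local-M} to kill every third partial of $f$ on $U$ except $\partial_v^3 f$, so that no unwanted large term contaminates \ref{big 2nd deriv}.
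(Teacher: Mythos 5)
Your proof is correct and follows the same underlying idea as the paper: the coordinate expression for the Christoffel symbols of the first kind, $\Gamma_{ij,k}=\tfrac12(\partial_i g_{jk}+\partial_j g_{ik}-\partial_k g_{ij})$, is linear in the metric, so $(\tilde\Gamma-\hat\Gamma)_{ij,k}$ is computed directly from $(g_s-\hat g)_{ij}$ and the index bookkeeping is driven by Parts 1--3 and 5 of Lemma \ref{local-M}. The paper itself defers Parts 1 and 2 to the analogous computations in \cite{MurphWilh} and writes out the Koszul reduction explicitly only for Part 4, while proving Part 3 by compactness of $\overline{U}\times[0,1]$; your version simply makes those deferred steps explicit and bounds $\hat\Gamma$ through $\left\vert g-\hat g\right\vert_{C^3}<\xi$ instead, which is an equivalent route.
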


\begin{proof}
The proofs of Parts 1 and 2 are identical to the proofs of Propositions 2.5
and 2.6 in \cite{MurphWilh}.

To prove Part 3, first recall that our coordinates and hence our Christoffel
symbols for all of our metrics are defined on a neighborhood $V$ of closure
of $U.$ Part 3 follows from this and the fact that $\overline{U}\times \left[
0,1\right] $ is compact.

To prove Part 4, we note that by the Koszul formula, each of the expressions
is equal to 
\begin{align*}
& \frac{1}{2}\partial _{v}\partial _{v}\left[ \partial _{v}(\tilde{g}-\hat{g}%
)(E_{2},E_{3})+\partial _{2}(\tilde{g}-\hat{g})(E_{3},E_{v})-\partial _{3}(%
\tilde{g}-\hat{g})(E_{v},E_{2})\right] \\
& =\frac{1}{2}\partial _{v}\partial _{v}\partial _{v}(\tilde{g}-\hat{g}%
)(E_{2},E_{3})\text{ by (\ref{gs mibnus g hat})} \\
& =\frac{s}{2}\partial _{v}\partial _{v}\partial _{v}(f).
\end{align*}
\end{proof}

Additionally we need

\begin{proposition}
\label{inverse difference prop}Let $U$ be a coordinate neighborhood as in ( %
\ref{U dfn}). There is a $C>0$ so that on $U$ the coefficients $\tilde{g}%
^{ij}$ and $\hat{g}^{ij}$ of the inverses of $\{\tilde{g}\}_{ij}$ and $\{%
\hat{g}\}_{ij}$ satisfy 
\begin{equation}
\left\vert \tilde{g}^{ij}-\hat{g}^{ij}\right\vert _{C^{1}}<C(\varepsilon s),
\label{C1 bnd on inverse coeffs}
\end{equation}%
and for all $s\in \left[ 0,1\right] ,$ 
\begin{equation}
\max \left\{ \tilde{g}^{ij},\hat{g}^{ij}\right\} <C
\label{bnd on inverse
				matrix}
\end{equation}%
throughout $U.$
\end{proposition}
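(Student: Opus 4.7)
The plan is to derive both estimates from the matrix identity
\begin{equation*}
\tilde{g}^{ij} - \hat{g}^{ij} \;=\; \tilde{g}^{ia}\bigl(\hat{g}_{ab} - \tilde{g}_{ab}\bigr)\hat{g}^{bj},
\end{equation*}
which follows from $\tilde{g}^{ia}\hat{g}_{ab} - \delta^{i}{}_{b} = \tilde{g}^{ia}(\hat{g}_{ab} - \tilde{g}_{ab})$ upon multiplying on the right by $\hat{g}^{bj}$. By \ref{gs mibnus g hat} the only nonzero entries of $\hat{g}_{ab} - \tilde{g}_{ab}$ are $-sf$ in positions $(2,3)$ and $(3,2)$, and Part 1 of Lemma \ref{local-M} gives $|f|_{C^{1}} < \varepsilon$; hence $|\hat{g} - \tilde{g}|_{C^{1}} = O(\varepsilon s)$.

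First I would establish \ref{bnd on inverse matrix}. Since $\overline{U}$ is compact and $\hat{g}$ is fixed, the eigenvalues of $\hat{g}$ at each point of $\overline{U}$ are bounded below away from $0$. Provided $\varepsilon$ is taken small enough (which we are free to arrange when invoking Lemma \ref{local-M}), the $C^{0}$ estimate $|\hat{g} - \tilde{g}|_{C^{0}} = O(\varepsilon s)$ keeps the eigenvalues of $\tilde{g}$ uniformly bounded below on $\overline{U} \times [0,1]$. Cramer's rule then yields a uniform upper bound on the entries of both $\hat{g}^{ij}$ and $\tilde{g}^{ij}$, depending only on $U$ and $V$.

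To obtain \ref{C1 bnd on inverse coeffs}, I would feed these uniform bounds into the identity displayed above; its right-hand side is immediately seen to be $O(\varepsilon s)$ pointwise. For the first derivatives, applying $\partial_{c}$ to the identity produces three terms, in each of which one factor is either $\hat{g}_{ab} - \tilde{g}_{ab}$ or $\partial_{c}(\hat{g}_{ab} - \tilde{g}_{ab})$ and hence of size $O(\varepsilon s)$. The remaining factors are either $\hat{g}^{\bullet\bullet}$, $\tilde{g}^{\bullet\bullet}$, or their first partial derivatives; derivatives of $\tilde{g}^{ab}$ are controlled via the standard identity $\partial_{c}\tilde{g}^{ab} = -\tilde{g}^{ae}(\partial_{c}\tilde{g}_{ef})\tilde{g}^{fb}$, the already-established uniform bound on $\tilde{g}^{ab}$, and the fact that $\partial_{c}\tilde{g}_{ef} = \partial_{c}\hat{g}_{ef} + O(\varepsilon s)$ is uniformly bounded on $\overline{U}\times[0,1]$. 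Derivatives of $\hat{g}^{ab}$ are treated analogously and depend only on $\hat{g}$.

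The only real obstacle is bookkeeping: one must verify that every implicit constant in the above $O(\cdot)$ terms depends only on $U$ and $V$ and not on $s$. This is immediate from compactness of $\overline{U} \times [0,1]$ together with the observation that $\tilde{g} - \hat{g}$ vanishes to first order in $s$ with coefficients of uniformly bounded $C^{1}$ norm, so that all of the expressions that appear after differentiating the identity are continuous functions on a compact set and thus bounded.
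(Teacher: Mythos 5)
Your argument is correct, and it arrives at the same estimates by a genuinely more explicit route. The paper's proof is quite terse: it notes that $\left\vert (\tilde g - \hat g)_{ij}\right\vert_{C^1} = O(\varepsilon s)$, observes that the maps $\hat G:\overline{U}\to Gl(n)$ and $\tilde G:\overline{U}\times[-\varepsilon,\varepsilon]\to Gl(n)$ sending a point to the matrix of metric coefficients take values in a compact subset of $Gl(n)$, and then simply invokes the fact that matrix inversion $\mathcal{I}:Gl(n)\to Gl(n)$ is $C^\infty$ to conclude both \ref{bnd on inverse matrix} and \ref{C1 bnd on inverse coeffs}. In other words, the paper compresses the whole argument into the statement that a smooth map on a compact set is uniformly Lipschitz in $C^1$.

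You unpack exactly what the smoothness of $\mathcal{I}$ is doing by writing out the resolvent-type identity $\tilde g^{ij}-\hat g^{ij}=\tilde g^{ia}(\hat g_{ab}-\tilde g_{ab})\hat g^{bj}$, establishing the uniform bound on the inverses via eigenvalue/Cramer's-rule considerations on the compact set $\overline U\times[0,1]$, and then differentiating the identity together with $\partial_c\tilde g^{ab}=-\tilde g^{ae}(\partial_c\tilde g_{ef})\tilde g^{fb}$ to track the $C^1$ norm. The trade-off is the usual one: the paper's version is shorter and leans on a cited high-level fact, while yours is longer but makes every constant and every source of smallness visible, which is arguably safer bookkeeping given that the $O(\cdot)$ conventions of the paper require constants to depend only on $U$ and $V$. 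The one detail worth making explicit (which you do address, but could foreground more) is that $\hat g$ is not the single fixed metric $g$ but ranges over a $C^3$-ball around $g$; compactness of that ball together with $\overline U\times[0,1]$ is what makes the eigenvalue lower bound and all the $C^1$ bounds uniform.
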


\begin{proof}
It follows from Lemma \ref{local-M} and the definition of $\left( \tilde{g}-%
\hat{g}\right) _{ij}$ in (\ref{gs mibnus g hat}) that 
\begin{equation}
\left\vert \left( \tilde{g}-\hat{g}\right) _{ij}\right\vert _{C^{1}}<O\left(
\varepsilon s\right) .  \label{C1 close metric coeffis}
\end{equation}%
Since $\tilde{g}_{ij}\ $and $\hat{g}_{ij}$ are defined on a neighborhood $V$
of $\overline{U}$ they define maps 
\begin{eqnarray}
\hat{G} &:&\overline{U}\longrightarrow Gl\left( n\right) \text{ and }\tilde{G%
}:\overline{U}\times \left[ -\varepsilon ,\varepsilon \right]
\longrightarrow Gl\left( n\right)  \label{finite cover} \\
\hat{G}\left( u\right) &=&\left( \hat{g}_{ij}\left( u\right) \right) _{ij}%
\text{ and }\tilde{G}\left( u,s\right) =\left( \tilde{g}_{ij}^{s}\left(
u\right) \right) _{ij}  \notag
\end{eqnarray}%
that take values in a compact subset of $Gl\left( n\right) .$ This, together
with (\ref{C1 close metric coeffis}) and the fact that the inversion map $%
\mathcal{I}:Gl\left( n\right) \longrightarrow Gl\left( n\right) $ is $%
C^{\infty },$ gives us that there is a $C>0$ so that 
\begin{equation*}
\left\vert \tilde{g}^{ij}-\hat{g}^{ij}\right\vert _{C^{1}}<C(\varepsilon s)
\end{equation*}%
as claimed in (\ref{C1 bnd on inverse coeffs}).

The proof of (\ref{bnd on inverse matrix}) is similar.
\end{proof}

We continue to write $v$ for the first element in our frame.

\begin{proposition}
\label{setup-prop2} Let $i,j,k,l$ be arbitrary elements of $\{1,2,3\}$.

\begin{enumerate}
\item We have 
\begin{equation}
\left\vert \left( \tilde{R}-\hat{R}\right) _{ijkl}\right\vert \leq
O(\varepsilon s),  \label{curvature-ineq}
\end{equation}%
except for up to a symmetry of $\tilde{R}-\hat{R},$ the case of $(\tilde{R}-%
\hat{R})_{2vv3}.$ In that event, we have 
\begin{equation}
(\tilde{R}-\hat{R})_{2vv3}=\partial _{v}\left( \hat{\Gamma}-\tilde{\Gamma}%
\right) _{2v,3}+O(\varepsilon s)=-\frac{s}{2}\partial _{v}\partial
_{v}f+O(\varepsilon s).  \label{curvature-ineq2}
\end{equation}

\item There is a $C>0$ so that for all $s\in \left[ 0,1\right] ,$ 
\begin{equation*}
\left\vert \tilde{R}_{ijkl}\right\vert \leq C\text{ and }\left\vert \hat{R}%
_{ijkl}\right\vert \leq C
\end{equation*}%
throughout $U.$

\item For $K$ sufficiently large and as in Lemma \ref{local-M}, if $%
\left\vert (\tilde{R}-\hat{R})_{2vv3}\right\vert \leq s\sqrt{K},$ then 
\begin{equation}
(\tilde{\nabla}_{v}\tilde{R}-\hat{\nabla}_{v}\hat{R})_{2vv3}\geq K.
\end{equation}%
In particular, 
\begin{equation*}
\max \left\{ \left\vert (\tilde{R}-\hat{R})_{2vv3}\right\vert ,\left\vert (%
\tilde{\nabla}_{v}\tilde{R}-\hat{\nabla}_{v}\hat{R})_{2vv3}\right\vert
\right\} \geq s\sqrt{K}.
\end{equation*}
\end{enumerate}
\end{proposition}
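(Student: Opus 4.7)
The plan is to expand $\tilde{R} - \hat{R}$ and $\tilde{\nabla}_{v}\tilde{R} - \hat{\nabla}_{v}\hat{R}$ term by term using formulas \ref{curvature} and \ref{cov-curvature2}, and then to read off the size of each contribution using Propositions \ref{setup-prop} and \ref{inverse difference prop}. For Part 1, all contributions turn out to be $O(\varepsilon s)$ except for a single surviving term in the orbit of $(2,v,v,3)$; for Part 3, we apply one more $\partial_{v}$ to the Part 1 expansion and track which term is responsible for the largest contribution.

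For Part 1, subtract the $\hat{g}$-version of \ref{curvature} from the $\tilde{g}$-version. Each quadratic difference expands via
\begin{equation*}
\tilde{g}^{\sigma\tau}\tilde{\Gamma}_{ik,\sigma}\tilde{\Gamma}_{jl,\tau} - \hat{g}^{\sigma\tau}\hat{\Gamma}_{ik,\sigma}\hat{\Gamma}_{jl,\tau} = (\tilde{g}^{\sigma\tau} - \hat{g}^{\sigma\tau})\tilde{\Gamma}_{ik,\sigma}\tilde{\Gamma}_{jl,\tau} + \hat{g}^{\sigma\tau}(\tilde{\Gamma}_{ik,\sigma} - \hat{\Gamma}_{ik,\sigma})\tilde{\Gamma}_{jl,\tau} + \hat{g}^{\sigma\tau}\hat{\Gamma}_{ik,\sigma}(\tilde{\Gamma}_{jl,\tau} - \hat{\Gamma}_{jl,\tau}),
\end{equation*}
so that in every summand one factor has size $O(\varepsilon s)$ (by Proposition \ref{inverse difference prop} and Part 1 of Proposition \ref{setup-prop}) and the remaining factors are uniformly bounded (by Proposition \ref{inverse difference prop} and Part 3 of Proposition \ref{setup-prop}). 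The linear piece equals $\partial_{i}(\tilde{\Gamma} - \hat{\Gamma})_{jk,l} - \partial_{j}(\tilde{\Gamma} - \hat{\Gamma})_{ik,l}$, which by Part 2 of Proposition \ref{setup-prop} is $O(\varepsilon s)$ unless the differentiating index is $v$ and the Christoffel indices sit in the explicit ``big'' list. Combined with the antisymmetries of $\tilde{R}_{ijkl}$ in the first and last index pairs, this isolates exactly the orbit of $(2,v,v,3)$; there the surviving contribution is $-\partial_{v}(\tilde{\Gamma} - \hat{\Gamma})_{2v,3} = -\tfrac{s}{2}\partial_{v}\partial_{v}f$, yielding \ref{curvature-ineq2}. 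Part 2 is then a compactness statement: $\tilde{\Gamma}_{ij,k}$, $\tilde{g}^{ij}$, and their first-order partials are smooth on $\overline{U}\times [0,1]$, so \ref{curvature} displays $\tilde{R}_{ijkl}$ as a continuous function on this compact set.

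For Part 3, expand $2(\tilde{\nabla}_{v}\tilde{R} - \hat{\nabla}_{v}\hat{R})_{2vv3}$ via \ref{cov-curvature2}. The four correction terms are each triple products of an inverse metric coefficient, a Christoffel symbol, and a curvature component; writing each $\tilde{\phantom{A}} - \hat{\phantom{A}}$ as a three-way telescoping sum, Proposition \ref{inverse difference prop} together with Parts 1 and 3 of Proposition \ref{setup-prop} and the already-established Parts 1 and 2 above bound each correction term by $O(\varepsilon s) + O(s\sqrt{K})$, the $O(s\sqrt{K})$ appearing only when the curvature difference factor is $(\tilde{R} - \hat{R})_{2vv3}$ itself (whose size is controlled by the hypothesis). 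For the leading term $2\partial_{v}\tilde{R}_{2vv3}$, differentiate the Part 1 expansion in the $v$-direction. Cross terms such as $\partial_{v}\partial_{2}(\tilde{\Gamma} - \hat{\Gamma})_{vv,3}$ vanish identically because the only nonzero entries of $\tilde{g} - \hat{g}$ are the $(2,3)$ and $(3,2)$ components; the $\partial_{v}$ of the quadratic piece contributes at most $O(s\sqrt{K}) + O(\varepsilon s)$ by telescoping and the hypothesis; and what remains is $-\partial_{v}\partial_{v}(\tilde{\Gamma} - \hat{\Gamma})_{2v,3} = -\tfrac{s}{2}\partial_{v}\partial_{v}\partial_{v}f$, by Part 4 of Proposition \ref{setup-prop}. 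Assuming $|(\tilde{R} - \hat{R})_{2vv3}| \leq s\sqrt{K}$ and invoking \ref{curvature-ineq2} forces $|\partial_{v}\partial_{v}f| \leq 2\sqrt{K} + O(\varepsilon)$, which after enlarging $K$ by a fixed constant falls within the hypothesis of Part 5 of Lemma \ref{local-M}, giving $|\partial_{v}\partial_{v}\partial_{v}f| \geq K$; the leading contribution then dominates and the asserted lower bound follows.

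The main obstacle is the bookkeeping in Part 3: once one $\partial_{v}$ is applied to the Part 1 expansion and the correction terms of \ref{cov-curvature2} are added in, several product terms carry factors of the form $\partial_{v}(\tilde{\Gamma} - \hat{\Gamma})$ on the ``big'' orbit, which are of size $O(s\sqrt{K})$ in the regime of interest rather than $O(\varepsilon s)$. One has to verify that, after combining with the hypothesis $|(\tilde{R} - \hat{R})_{2vv3}| \leq s\sqrt{K}$ and with the sparseness of $\tilde{g} - \hat{g}$ in \ref{gs mibnus g hat} (which through the Koszul formula kills a number of the potentially dangerous Christoffel derivatives outright), all these sub-leading contributions are absorbed by the dominant $\tfrac{s}{2}|\partial_{v}\partial_{v}\partial_{v}f| \geq \tfrac{sK}{2}$ for $K$ sufficiently large and $\varepsilon$ sufficiently small.
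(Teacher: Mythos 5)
Your proof is correct and follows essentially the same route as the paper: expand the curvature and covariant-derivative differences via the coordinate formulas, isolate the exceptional orbit of $(2,v,v,3)$ using Part 2 of Proposition \ref{setup-prop} and the curvature symmetries, bound the product corrections by telescoping (the paper packages this as Lemmas \ref{C0 bnd Lemma} and \ref{C1 bnd lemma}), use the Koszul-formula sparsity of $\tilde g-\hat g$ to kill $\partial_v\partial_2(\tilde\Gamma-\hat\Gamma)_{vv,3}$, and finally invoke Part 4 of Proposition \ref{setup-prop} and Part 5 of Lemma \ref{local-M} so that $\tfrac{s}{2}\partial_v\partial_v\partial_v f$ dominates. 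The only cosmetic difference is your constant in the intermediate bound $|\partial_v\partial_v f|\le 2\sqrt K+O(\varepsilon)$ versus the paper's $4\sqrt K$; both use the same underlying observation that the construction of $h$ in Lemma \ref{local-R} tolerates a fixed multiplicative enlargement of the threshold.
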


The proof uses two calculus estimates that we make explicit in the next two
lemmas.

\begin{lemma}
\label{C0 bnd Lemma}Let $f,g,h,\tilde{f},\tilde{g},\tilde{h}:\mathbb{R}%
\longrightarrow \mathbb{R}$ be $C^{1}$--functions. Suppose that all six
functions are bounded by $C$ and all three differences satisfy 
\begin{eqnarray*}
\left\vert f-\tilde{f}\right\vert &<&D \\
\left\vert g-\tilde{g}\right\vert &<&D \\
\left\vert h-\tilde{h}\right\vert &<&D,
\end{eqnarray*}%
then 
\begin{equation*}
\left\vert fgh-\tilde{f}\tilde{g}\tilde{h}\right\vert \leq 3DC^{2}
\end{equation*}
\end{lemma}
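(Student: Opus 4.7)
The plan is to use the standard telescoping identity that rewrites the difference of three-fold products as a sum of three terms, each of which differs by exactly one of the three factors. Concretely, I would write
\begin{equation*}
fgh - \tilde{f}\tilde{g}\tilde{h} = (f-\tilde{f})gh + \tilde{f}(g-\tilde{g})h + \tilde{f}\tilde{g}(h-\tilde{h}),
\end{equation*}
which is easily verified by expanding and cancelling the intermediate terms $\tilde{f}gh$ and $\tilde{f}\tilde{g}h$.

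Then I would apply the triangle inequality, together with the hypothesis that each of the six functions is bounded by $C$ and each of the three differences is bounded by $D$, to control each summand by $D C^{2}$. This yields the claimed bound $3DC^{2}$.

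The argument is purely algebraic and uses no calculus despite the $C^{1}$ hypothesis (which is presumably stated for uniformity with later uses of the lemma); there is no real obstacle. The only care needed is to choose the telescoping so that each summand contains exactly one factor from $\{f-\tilde{f},\, g-\tilde{g},\, h-\tilde{h}\}$ and two factors drawn from the uniformly bounded collection, which the decomposition above accomplishes.
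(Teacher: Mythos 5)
Your proof is correct and uses exactly the same telescoping decomposition as the paper, followed by the triangle inequality and the given bounds. Your observation that the $C^{1}$ hypothesis is not needed here (it serves only to align the statement with Lemma \ref{C1 bnd lemma}) is also accurate.
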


\begin{proof}
Notice that 
\begin{equation*}
fgh=(f-\tilde{f})gh+\tilde{f}(g-\tilde{g})h+\tilde{f}\tilde{g}(h-\tilde{h})+ 
\tilde{f}\tilde{g}\tilde{h}.
\end{equation*}
Due to the bounds on the values of $f,g,h,\tilde{f},\tilde{g},\tilde{h}$ and 
$(f-\tilde{f})$, $(g-\tilde{g})$, $(h-\tilde{h})$ from our hypothesis, we
conclude that 
\begin{equation*}
\left\vert fgh-\tilde{f}\tilde{g}\tilde{h}\right\vert <3DC^{2}.
\end{equation*}
\end{proof}

\begin{lemma}
\label{C1 bnd lemma}Suppose that all six functions are $C^{1}$--bounded by $%
C,$ and 
\begin{eqnarray*}
\left\vert f-\tilde{f}\right\vert _{C^{1}} &<&D, \\
\left\vert g-\tilde{g}\right\vert _{C^{1}} &<&D,\text{ and} \\
\left\vert h-\tilde{h}\right\vert _{C^{1}} &<&D,
\end{eqnarray*}%
then 
\begin{equation*}
\left\vert fgh-\tilde{f}\tilde{g}\tilde{h}\right\vert _{C^{1}}<9DC^{2}
\end{equation*}
\end{lemma}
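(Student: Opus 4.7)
The plan is to reduce the $C^1$ estimate to two applications of Lemma \ref{C0 bnd Lemma}, one for the function values and one for the derivatives.

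First, I would observe that the $C^0$ part of the claim is immediate: since being $C^1$--bounded by $C$ implies in particular being $C^0$--bounded by $C$, and since $|f-\tilde{f}|, |g-\tilde{g}|, |h-\tilde{h}| < D$, Lemma \ref{C0 bnd Lemma} applied directly to $f,g,h,\tilde{f},\tilde{g},\tilde{h}$ yields
\begin{equation*}
\left\vert fgh - \tilde{f}\tilde{g}\tilde{h}\right\vert \leq 3DC^{2}.
\end{equation*}

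Next I would expand the derivative of the difference using the Leibniz rule,
\begin{equation*}
(fgh - \tilde{f}\tilde{g}\tilde{h})' = (f'gh - \tilde{f}'\tilde{g}\tilde{h}) + (fg'h - \tilde{f}\tilde{g}'\tilde{h}) + (fgh' - \tilde{f}\tilde{g}\tilde{h}'),
\end{equation*}
and treat each of the three parenthesized terms separately. In each term, the six functions involved (for example $f',g,h,\tilde{f}',\tilde{g},\tilde{h}$ in the first) are all bounded by $C$ as a consequence of the $C^1$--bound hypothesis, and the three relevant differences ($|f'-\tilde{f}'|$, $|g-\tilde{g}|$, $|h-\tilde{h}|$) are all bounded by $D$ for the same reason. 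A second application of Lemma \ref{C0 bnd Lemma} therefore gives $3DC^2$ for each of the three terms, and summing yields
\begin{equation*}
\left\vert (fgh - \tilde{f}\tilde{g}\tilde{h})' \right\vert \leq 9DC^{2}.
\end{equation*}
Combining the $C^0$ bound $3DC^2$ with the derivative bound $9DC^2$ gives the stated $C^1$ estimate $9DC^2$.

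There is no genuine obstacle here; the only thing to be mindful of is the bookkeeping convention for the $C^1$ norm (the stated bound $9DC^2$ corresponds to taking $|\cdot|_{C^1}$ as the maximum of the sup norms of the function and its derivative, which is consistent with the convention used throughout the paper).
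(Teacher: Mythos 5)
Your proof is correct and follows essentially the same route as the paper: split $(fgh - \tilde{f}\tilde{g}\tilde{h})'$ via the Leibniz rule into three differences and bound each by a telescoping estimate. Your version is a mild streamlining, since you reuse Lemma \ref{C0 bnd Lemma} as a black box for each of the three Leibniz terms rather than writing out the telescoping expansion again, and you also explicitly dispatch the $C^0$ part of the $C^1$ norm (bounding it by $3DC^2 \le 9DC^2$), a step the paper leaves implicit.
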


\begin{proof}
We have 
\begin{align*}
(fgh)^{\prime }-(\tilde{f}\tilde{g}\tilde{h})^{\prime }& =(f^{\prime }gh- 
\tilde{f}^{\prime }\tilde{g}\tilde{h}) \\
& +(fg^{\prime }h-\tilde{f}\tilde{g}^{\prime }\tilde{h}) \\
& +(fgh^{\prime }-\tilde{f}\tilde{g}\tilde{h}^{\prime }).
\end{align*}
We apply the following expansion to the term $f^{\prime }gh$ located in the
right hand side of the previous display: 
\begin{equation*}
f^{\prime }gh=(f-\tilde{f})^{\prime }gh+\tilde{f}^{\prime }(g-\tilde{g})h+ 
\tilde{f}^{\prime }\tilde{g}(h-\tilde{h})+\tilde{f}^{\prime }\tilde{g}\tilde{
h}.
\end{equation*}
Doing the analogous substitution for $fg^{\prime }h$ and $fgh^{\prime }$
yields 
\begin{align*}
(fgh)^{\prime }-(\tilde{f}\tilde{g}\tilde{h})^{\prime }& =(f-\tilde{f}
)^{\prime }gh+\tilde{f}^{\prime }(g-\tilde{g})h+\tilde{f}^{\prime }\tilde{g}
(h-\tilde{h}) \\
& +(f-\tilde{f})g^{\prime }h+\tilde{f}(g-\tilde{g})^{\prime }h+\tilde{f} 
\tilde{g}^{\prime }(h-\tilde{h}) \\
& +(f-\tilde{f})gh^{\prime }+\tilde{f}(g-\tilde{g})h^{\prime }+\tilde{f} 
\tilde{g}(h-\tilde{h})^{\prime }.
\end{align*}
Combining this with our hypothesis, we get 
\begin{equation*}
\left\vert fgh-\tilde{f}\tilde{g}\tilde{h}\right\vert _{C^{1}}<9DC^{2}.
\end{equation*}
\end{proof}

\begin{proof}[Proof of Proposition \protect\ref{setup-prop2}]
By Equation \ref{curvature}, we have 
\begin{eqnarray}
\tilde{R}_{ijkl} &=&\partial _{i}\tilde{\Gamma}_{jk,l}-\partial _{j}\tilde{%
\Gamma}_{ik,l}+\tilde{g}^{\sigma \tau }\left( \tilde{\Gamma}_{ik,\sigma }%
\tilde{\Gamma}_{jl,\tau }-\tilde{\Gamma}_{jk,\sigma }\tilde{\Gamma}_{il,\tau
}\right) \text{ and}  \label{R tilde} \\
\hat{R}_{ijkl} &=&\partial _{i}\hat{\Gamma}_{jk,l}-\partial _{j}\hat{\Gamma}%
_{ik,l}+\left( \hat{g}\right) ^{\sigma \tau }\left( \hat{\Gamma}_{ik,\sigma }%
\hat{\Gamma}_{jl,\tau }-\hat{\Gamma}_{jk,\sigma }\hat{\Gamma}_{il,\tau
}\right) .  \label{R hat}
\end{eqnarray}

If $i=j=v,$ then $\tilde{R}_{ijkl}=\hat{R}_{ijkl}=0.$ Otherwise, at most one
of $\partial _{i}\left( \tilde{\Gamma}-\hat{\Gamma}\right) _{jk,l}$ or $%
\partial _{j}\left( \tilde{\Gamma}-\hat{\Gamma}\right) _{ik,l}$ can
correspond to the indices in (\ref{big Christ deriv}). In the event it is $%
\partial _{i}\left( \tilde{\Gamma}-\hat{\Gamma}\right) _{jk,l}$ , then $i=v$
and $j,k,l$ are distinct elements of $\left\{ v,2,3\right\} ,$ so the
curvature difference $\left( \tilde{R}-\hat{R}\right) _{ijkl}$ is up to a
sign the exceptional one in (\ref{curvature-ineq2}). For similar reasons, if 
$\partial _{j}\left( \tilde{\Gamma}-\hat{\Gamma}\right) _{ik,l}$ corresponds
to the indices in (\ref{big Christ deriv}), then curvature difference $%
\left( \tilde{R}-\hat{R}\right) _{ijkl}$ is up to a sign the exceptional one
in (\ref{curvature-ineq2}). If neither $\partial _{i}\left( \tilde{\Gamma}-%
\hat{\Gamma}\right) _{jk,l}$ nor $\partial _{j}\left( \tilde{\Gamma}-\hat{%
\Gamma}\right) _{ik,l}$ corresponds to the indices in (\ref{big Christ deriv}%
), then by combining Propositions \ref{setup-prop} and \ref{inverse
difference prop} with Equations \ref{R tilde} and \ref{R hat} and the two
calculus lemmas above, we conclude that 
\begin{equation*}
\left\vert \left( \tilde{R}-\hat{R}\right) _{ijkl}\right\vert \leq
O(\varepsilon s),\text{ proving }(\ref{curvature-ineq}).
\end{equation*}

To prove (\ref{curvature-ineq2}), note that in this special case, Equations %
\ref{R tilde} and \ref{R hat} become 
\begin{equation*}
\tilde{R}_{2vv3}=\partial _{2}\tilde{\Gamma}_{vv,3}-\partial _{v}\tilde{%
\Gamma}_{2v,3}+\tilde{g}^{\sigma \tau }\left( \tilde{\Gamma}_{2v,\sigma }%
\tilde{\Gamma}_{v3,\tau }-\tilde{\Gamma}_{vv,\sigma }\tilde{\Gamma}_{23,\tau
}\right)
\end{equation*}%
and

\begin{equation*}
\hat{R}_{2vv3}=\partial _{2}\hat{\Gamma}_{vv,3}-\partial _{v}\hat{\Gamma}%
_{2v,3}+\tilde{g}^{\sigma \tau }\left( \hat{\Gamma}_{2v,\sigma }\hat{\Gamma}%
_{v3,\tau }-\hat{\Gamma}_{vv,\sigma }\hat{\Gamma}_{23,\tau }\right)
\end{equation*}%
It follows from the Koszul formula that 
\begin{eqnarray*}
2\tilde{\Gamma}_{vv,3} &=&2\tilde{g}\left( \tilde{\nabla}_{E_{V}}E_{V},E_{3}%
\right) \\
&=&2D_{E_{V}}\tilde{g}\left( E_{V},E_{3}\right) -D_{E_{3}}\tilde{g}\left(
E_{V},E_{V}\right)
\end{eqnarray*}%
and similarly%
\begin{equation*}
2\hat{\Gamma}_{vv,3}=2D_{E_{V}}\hat{g}\left( E_{V},E_{3}\right) -D_{E_{3}}%
\hat{g}\left( E_{V},E_{V}\right) .
\end{equation*}%
However, from the definition of $\tilde{g}-\hat{g},$ in (\ref{gs mibnus g
hat}) we see that 
\begin{equation*}
\tilde{g}\left( E_{V},E_{3}\right) -\hat{g}\left( E_{V},E_{3}\right) =\tilde{%
g}\left( E_{V},E_{V}\right) -\hat{g}\left( E_{V},E_{V}\right) \equiv 0.
\end{equation*}%
Hence 
\begin{equation}
\partial _{2}\tilde{\Gamma}_{vv,3}-\partial _{2}\hat{\Gamma}_{vv,3}\equiv 0.
\label{special Christofell}
\end{equation}%
Thus 
\begin{equation*}
\left( \tilde{R}-\hat{R}\right) _{2vv3}-\partial _{v}\left( \hat{\Gamma}-%
\tilde{\Gamma}\right) _{2v,3}=\tilde{g}^{\sigma \tau }\left( \tilde{\Gamma}%
_{2v,\sigma }\tilde{\Gamma}_{v3,\tau }-\tilde{\Gamma}_{vv,\sigma }\tilde{%
\Gamma}_{23,\tau }\right) -\tilde{g}^{\sigma \tau }\left( \hat{\Gamma}%
_{2v,\sigma }\hat{\Gamma}_{v3,\tau }-\hat{\Gamma}_{vv,\sigma }\hat{\Gamma}%
_{23,\tau }\right) .
\end{equation*}

By combining Propositions \ref{setup-prop} and \ref{inverse difference prop}
with Lemma \ref{C0 bnd Lemma}, we see that the right hand side of the
previous display is $\leq O\left( \varepsilon s\right) .$ Thus 
\begin{equation*}
\left\vert (\tilde{R}-\hat{R})_{2vv3}-\partial _{v}\left( \hat{\Gamma}-%
\tilde{\Gamma}\right) _{2v,3}\right\vert \leq O\left( \varepsilon s\right) ,
\end{equation*}%
and by Part 2 of Proposition \ref{setup-prop}, $\partial _{v}\left( \hat{%
\Gamma}-\tilde{\Gamma}\right) _{2v,3}=-\frac{s}{2}\partial _{v}\partial
_{v}f,$ so 
\begin{equation*}
(\tilde{R}-\hat{R})_{2vv3}=\partial _{v}\left( \hat{\Gamma}-\tilde{\Gamma}%
\right) _{2v,3}+O(\varepsilon s)=-\frac{s}{2}\partial _{v}\partial
_{v}f+O(\varepsilon s),
\end{equation*}%
as claimed.

Part 2 follows from a compactness argument as in the proof of Part 3 of
Proposition \ref{setup-prop}.

To prove Part 3, we recall that via Equation (\ref{cov-curvature2}) we have 
\begin{equation*}
2\left( \tilde{\nabla}_{v}\tilde{R}\right) _{2vv3}=2\partial _{v}\tilde{R}
_{2vv3}-\tilde{g}^{\tau \mu }\tilde{\Gamma}_{v2,\mu }\tilde{R}_{\tau vv3}- 
\tilde{g}^{\tau \mu }\tilde{\Gamma}_{vv,\mu }\tilde{R}_{2\tau v3}-\tilde{g}
^{\tau \mu }\tilde{\Gamma}_{vv,\mu }\tilde{R}_{2v\tau 3}-\tilde{g}^{\tau \mu
}\tilde{\Gamma}_{v3,\mu }\tilde{R}_{2vv\tau }
\end{equation*}
and 
\begin{equation*}
2\left( \hat{\nabla}_{v}\hat{R}\right) _{2vv3}=2\partial _{v}\hat{R}_{2vv3}- 
\hat{g}^{\tau \mu }\hat{\Gamma}_{v2,\mu }\hat{R}_{\tau vv3}-\hat{g}^{\tau
\mu }\hat{\Gamma}_{vv,\mu }\hat{R}_{2\tau v3}-\hat{g}^{\tau \mu }\hat{\Gamma}
_{vv,\mu }\hat{R}_{2v\tau 3}-\hat{g}^{\tau \mu }\hat{\Gamma}_{v3,\mu }\hat{R}
_{2vv\tau }.
\end{equation*}
Thus 
\begin{eqnarray}
&&2\left( \left( \tilde{\nabla}_{v}\tilde{R}\right) _{2vv3}-\partial _{v} 
\tilde{R}_{2vv3}\right) -2\left( \left( \hat{\nabla}_{v}\hat{R}\right)
_{2vv3}-2\partial _{v}\hat{R}_{2vv3}\right)  \notag \\
&=&\left( \hat{g}^{\tau \mu }\hat{\Gamma}_{v2,\mu }\hat{R}_{\tau vv3}-\tilde{
g}^{\tau \mu }\tilde{\Gamma}_{v2,\mu }\tilde{R}_{\tau vv3}\right) +\left( 
\hat{g}^{\tau \mu }\hat{\Gamma}_{vv,\mu }\hat{R}_{2\tau v3}-\hat{g}^{\tau
\mu }\hat{\Gamma}_{vv,\mu }\hat{R}_{2\tau v3}\right)  \notag \\
&&+\left( \hat{g}^{\tau \mu }\hat{\Gamma}_{vv,\mu }\hat{R}_{2v\tau 3}-\tilde{
g}^{\tau \mu }\tilde{\Gamma}_{vv,\mu }\tilde{R}_{2v\tau 3}\right) +\left( 
\hat{g}^{\tau \mu }\hat{\Gamma}_{v3,\mu }\hat{R}_{2vv\tau }-\tilde{g}^{\tau
\mu }\tilde{\Gamma}_{v3,\mu }\tilde{R}_{2vv\tau }\right) .
\label{covar diff}
\end{eqnarray}

By combining our hypothesis that $\left\vert (\tilde{R}-\hat{R}%
)_{2vv3}\right\vert \leq s\sqrt{K}$ with Parts 1 and 2 of this result,
Proposition \ref{setup-prop}, Proposition \ref{inverse difference prop}, and
Lemma \ref{C0 bnd Lemma}, we see that each of the four terms in the
parentheses on the right hand side of (\ref{covar diff}) is $\leq O\left( s%
\sqrt{K}\right) .$ So 
\begin{equation}
\left\vert 2\left( \left( \tilde{\nabla}_{v}\tilde{R}\right)
_{2vv3}-\partial _{v}\tilde{R}_{2vv3}\right) -2\left( \left( \hat{\nabla}_{v}%
\hat{R}\right) _{2vv3}-2\partial _{v}\hat{R}_{2vv3}\right) \right\vert \leq
O\left( s\sqrt{K}\right) .  \label{covar almot there}
\end{equation}

By Equation \ref{curvature}, we have 
\begin{equation*}
\partial _{v}\tilde{R}_{2vv3}=\partial _{v}\partial _{2}\tilde{\Gamma}%
_{vv,3}-\partial _{v}\partial _{v}\tilde{\Gamma}_{2v,3}+\partial _{v}\left( 
\tilde{g}^{\sigma \tau }\left( \tilde{\Gamma}_{2v,\sigma }\tilde{\Gamma}%
_{v3,\tau }-\tilde{\Gamma}_{vv,\sigma }\tilde{\Gamma}_{23,\tau }\right)
\right)
\end{equation*}%
and 
\begin{equation*}
\partial _{v}\hat{R}_{2vv3}=\partial _{v}\partial _{2}\hat{\Gamma}%
_{vv,3}-\partial _{v}\partial _{v}\hat{\Gamma}_{2v,3}+\partial _{v}\left( 
\hat{g}^{\sigma \tau }\left( \hat{\Gamma}_{2v,\sigma }\hat{\Gamma}_{v3,\tau
}-\hat{\Gamma}_{vv,\sigma }\hat{\Gamma}_{23,\tau }\right) \right) .
\end{equation*}

It follows from Equation (\ref{special Christofell}) that 
\begin{equation*}
\partial _{v}\partial _{2}\tilde{\Gamma}_{vv,3}-\partial _{v}\partial _{2}%
\hat{\Gamma}_{vv,3}\equiv 0.
\end{equation*}%
Thus 
\begin{eqnarray}
&&\left\vert \left( \partial _{v}\left( \tilde{R}_{2vv3}-\hat{R}%
_{2vv3}\right) \right) -\partial _{v}\partial _{v}\left( \hat{\Gamma}_{2v,3}-%
\tilde{\Gamma}_{2v,3}\right) \right\vert  \notag \\
&=&\partial _{v}\left( \left( \tilde{g}^{\sigma \tau }\tilde{\Gamma}%
_{2v,\sigma }\tilde{\Gamma}_{v3,\tau }-\hat{g}^{\sigma \tau }\hat{\Gamma}%
_{2v,\sigma }\hat{\Gamma}_{v3,\tau }\right) \right) +\partial _{v}\left( 
\hat{g}^{\sigma \tau }\hat{\Gamma}_{vv,\sigma }\hat{\Gamma}_{23,\tau }-%
\tilde{g}^{\sigma \tau }\tilde{\Gamma}_{vv,\sigma }\tilde{\Gamma}_{23,\tau
}\right) .  \label{almost there}
\end{eqnarray}

Equation (\ref{curvature-ineq2}), together with our hypothesis that $%
\left\vert (\tilde{R}-\hat{R})_{2vv3}\right\vert \leq s\sqrt{K}$, and Part 2
of Proposition \ref{setup-prop} together give us that 
\begin{equation}
\left\vert \partial _{v}\left( \tilde{\Gamma}-\hat{\Gamma}\right)
_{ij,k}\right\vert \leq 2s\sqrt{K}.  \label{Christ small}
\end{equation}

Combining this with Proposition \ref{setup-prop}, Proposition \ref{inverse
difference prop}, and Lemma \ref{C1 bnd lemma}, we see that each term on the
right hand side of (\ref{almost there}) is $\leq O\left( s\sqrt{K}\right) .$
So 
\begin{equation*}
\left\vert \left( \partial _{v}\tilde{R}_{2vv3}-\partial _{v}\hat{R}%
_{2vv3}\right) -\left( \partial _{v}\partial _{v}\left( \hat{\Gamma}_{2v,3}-%
\tilde{\Gamma}_{2v,3}\right) \right) \right\vert \leq O\left( s\sqrt{K}%
\right) .
\end{equation*}

From Part 4 of Proposition \ref{setup-prop}, we have $\partial _{v}\partial
_{v}\left( \hat{\Gamma}_{2v,3}-\tilde{\Gamma}_{2v,3}\right) =\frac{s}{2}%
\partial _{v}\partial _{v}\partial _{v}f.$ This, the previous display, and (%
\ref{covar almot there}) combine to give us that 
\begin{equation*}
\left\vert \left( \tilde{\nabla}_{v}\tilde{R}\right) _{2vv3}-\left( \hat{%
\nabla}_{v}\hat{R}\right) _{2vv3}-\frac{s}{2}\partial _{v}\partial
_{v}\partial _{v}f\right\vert \leq O\left( s\sqrt{K}\right) .
\end{equation*}

Inequality (\ref{Christ small}) together with (\ref{big Christ deriv}) gives
us $\left\vert \partial _{v}\partial _{v}f\right\vert \leq 4\sqrt{K}.$ Thus
by Part 5 of Lemma \ref{local-M}, with $4\sqrt{K}$ playing the role of $%
\sqrt{K}$, $\left\vert \partial _{v}\partial _{v}\partial _{v}f\right\vert
\geq 16K$ . Therefore 
\begin{equation*}
\left\vert \left( \tilde{\nabla}_{v}\tilde{R}\right) _{2vv3}-\left( \hat{%
\nabla}_{v}\hat{R}\right) _{2vv3}\right\vert \geq 16K-O\left( s\sqrt{K}%
\right) >K,
\end{equation*}%
if $K$ is sufficiently large.
\end{proof}

Part 1 of the main lemma (\ref{local-main}) follows from our definition of $%
g_{s}-\hat{g}$ in (\ref{gs mibnus g hat}). Part 2 of the main lemma follows
from our choice of $f$; see Part 4 of Lemma \ref{local-M}. Part 3 of the
main lemma follows from Part 3 of Proposition \ref{setup-prop2} and the
definition of the generic plane operator.

\section{The Global Argument\label{Global section}}

We prove the Density Assertion by repeatedly applying the main lemma (\ref%
{local-main}) and Lemma \ref{lem-C}. Except for changes of notation and
terminology, the argument is the same as the proof of the \textquotedblleft
Modified $l^{th}$--Partially Geodesic Assertion\textquotedblright\ in \cite%
{MurphWilh}, so we only give a brief description here.

First construct an open cover $\left\{ \mathcal{U}_{i}\right\} _{i=1}^{K}$
of the set $\mathcal{\ R}_{0}$ of $G$--rigid planes. We will apply the main
lemma and Lemma \ref{lem-C} successively to $\mathcal{U}_{1},$ $\mathcal{U}%
_{2},\ldots ,\mathcal{U}_{K}.$ So we assume that each $\mathcal{U}_{i}$ has
the form $B_{\rho }\left( \pi \left( P\right) \right) ,$ where the notation
is as in the main lemma. Prior to the first step, notice that since $\left\{ 
\mathcal{U}_{i}\right\} _{i=1}^{K}$ covers $\mathcal{R}_{0},$ the generic
plane operator $G:\mathcal{\ G}r\left( M\right) \longrightarrow \mathbb{R}$
is positive on $\mathcal{G}r\left( M\right) \setminus \cup _{i=1}^{K}%
\mathcal{U}_{i}.$ Since $\mathcal{G}r\left( M\right) \setminus \cup
_{i=1}^{K}\mathcal{U}_{i}$ is compact, there is a $\delta _{0}>0$ so that 
\begin{equation}
G|_{\mathcal{G}r\left( M\right) \setminus \cup _{i=1}^{K}\mathcal{U}%
_{i}}>\delta _{0}\text{.\label{pos generic op}}
\end{equation}%
With $\delta _{0}$ in hand, we perform our first deformation by applying the
main lemma to $\mathcal{U}_{1}.$ It follows from Lemma \ref{lem-C} that if
the deformation is small enough, then the resulting metric $g^{1}$ is $G$
--generic on $\mathcal{U}_{1}$. By combining (\ref{pos generic op}) with a
possible further restriction of the deformation size, we can also conclude
that the set $\mathcal{R}_{1}$ of $G$--rigid planes of $g^{1}$ is contained
in $\cup _{i=2}^{K}\mathcal{U}_{i}.$ As above, it follows from compactness
that there is a $\delta _{1}$ so that 
\begin{equation}
G^{g^{1}}|_{\mathcal{G}r\left( M\right) \setminus \cup _{i=2}^{K}\mathcal{U}%
_{i}}>\delta _{1},\text{\label{pos generic 1}}
\end{equation}%
where $G^{g^{1}}$ is the generic plane operator for $g^{1}.$

For the second step, apply the main lemma to $g^{1}$ on $\mathcal{U}_{2}$ to
obtain a deformation of metrics that, by Lemma \ref{lem-C}, are $G$--generic
on $\mathcal{U}_{2}$. By (\ref{pos generic 1}), if the deformation is small
enough, then the set $\mathcal{R}_{2}$ of $G$--rigid planes of $g^{2}$ is
contained in $\cup _{i=3}^{K}\mathcal{U}_{i}.$ Proceeding inductively, in
this manner, we obtain a $G$--generic metric that is as close as we please
to $g.$

%---------------------------------------------------------------------------------------------------------------------------------%

\end{document}